\newcommand{\ds}{\displaystyle}
\newcommand{\R}{{\mathbb{R}}}
\renewcommand{\P}{{\mathcal{P}}}
\newcommand{\x}{{\boldsymbol{x}}}
\renewcommand{\u}{{\boldsymbol{u}}}
\renewcommand{\v}{{\boldsymbol{v}}}
\newcommand{\w}{{\boldsymbol{w}}}
\newcommand{\y}{{\boldsymbol{y}}}
\newcommand{\z}{{\boldsymbol{z}}}
\newcommand{\zero}{{\boldsymbol{0}}}
\newcommand{\conv}{{\rm conv}}
\newcommand{\wt}{\widetilde}
\newcommand{\dist}{{\rm dist}}
\newtheorem{theorem}{Theorem}[section]
\newtheorem{lemma}[theorem]{Lemma}
\newtheorem{proposition}[theorem]{Proposition}
\newtheorem{conjecture}[theorem]{Conjecture}
\newtheorem{corollary}[theorem]{Corollary}
\theoremstyle{remark}
\newtheorem{remark}[theorem]{Remark}
\renewcommand{\phi}{\varphi}
\numberwithin{equation}{section}
\title{Upper estimates of Christoffel function on convex domains}
\author{A.\ Prymak}
\address{Department of Mathematics, University of Manitoba, Winnipeg, MB, R3T2N2, Canada}
\email{prymak@gmail.com}
\thanks{The author was supported by NSERC of Canada Discovery Grant RGPIN 04863-15.}
\keywords{Christoffel function, convex domains, algebraic polynomials, orthogonal polynomials, boundary effect}
\subjclass[2010]{42C05, 41A17, 41A63, 26D05, 42B99}
\begin{document}

\begin{abstract}
New upper bounds on the pointwise behaviour of Christoffel function on convex domains in $\R^d$ are obtained. These estimates are established by explicitly constructing the corresponding ``needle''-like algebraic polynomials having small integral norm on the domain, and are stated in terms of few easy-to-measure geometric characteristics of the location of the point of interest in the domain. Sharpness of the results is shown and examples of applications are given.
\end{abstract}

\maketitle

\section{Introduction, results and remarks}

For a compact set $D\subset\R^d$ with non-empty interior and a positive weight function $w\in L_1(D)$, the associated Christoffel function is defined as
\[
\lambda_n(D,w,\x)=\left(\sum_{k=1}^N\phi_k(\x)^2\right)^{-1}, \quad \x\in D,
\]
where $\P_n=\P_{n,d}$ is the space of all real algebraic polynomials of total degree $\le n$ in $d$ variables, and $\{\phi_k\}_{k=1}^N$ is an orthonormal basis of $\P_n$ with respect to the inner product $\langle f,g \rangle = \int_D f(\y)g(\y)w(\y)d\y$. Equivalently, the Christoffel function can be defined through the following extremal property:
\begin{equation}\label{def_lambda}
\lambda_n(D,w,\x)=\min_{f\in\P_n,\, |f(\x)|=1}\int_D f^2(\y)w(\y) d\y, \quad \x\in D.
\end{equation}
If $w\equiv 1$ is the uniform weight, we will write $\lambda_n(D,\x)=\lambda_n(D,w,\x)$ and this quantity will be of our primary interest. Christoffel functions play an extremely important role in the theory of orthogonal polynomials and other areas of analysis.

It was established in~\cite{Bo-elal} (see also~\cite{Xu} for related results) that for centrally symmetric positive continuous weight $w$ on the unit ball in $\R^d$ one has
\begin{equation}\label{ball_as}
\lim_{n\to\infty}{\lambda_n(B^d,w,\x)}{\binom{n+d}{d}}=\frac{\pi^{\frac{d+1}{2}}w(\x)\sqrt{1-|\x|^2}}{\Gamma(\frac{d+1}2)}, \quad |\x|<1,
\end{equation}
where $|\x|=(x_1^2+\dots+x_n^2)^{1/2}$ is the Euclidean norm of $\x=(x_1,\dots,x_d)$, and $B^d=\{\x\in\R^d:|\x|\le1\}$. This is an example of a typical result on computation of \emph{asymptotics} of Christoffel function, which usually establishes that
\[
\lim_{n\to\infty} n^{d} \lambda_n(D,w,\x) = \rho(D,w,\x)
\]
and explicitly computes the limit function $\rho(D,w,\x)$ at interior points $\x\in D$. When $d=1$ this has been done for quite general weights on the segment (see, e.g.~\cite{To}), while for higher dimensions only some special domains such as ball, simplex, cube have been covered for certain classes of weights.

One of the difficulties for the higher dimensions is understanding the influence of geometry of the domain on Christoffel function. In addition, there are usually no explicit expressions available for orthonormal polynomial bases on domains admitting any reasonable generality. In a recent important work~\cite{Kr} Kroo obtained sharp lower estimates on $\liminf_{n\to\infty} n^{d} \lambda_n(D,w,\x)$ for certain general classes of convex and star-like domains. One of the main motivations for the current work was the question whether an ``extra'' factor of $\log n$ could be removed in the sharpness result~\cite[Theorem~2]{Kr}, which will be answered affirmatively in Section~\ref{sect_examples}.

Rather than focusing on estimating the \emph{asymptotics} of Christoffel function, we will be concerned with its \emph{behavior}, i.e., computation of $\lambda_n(D,w,\x)$ up to a constant factor as a function of $n$ and $\x$. For example, it was established in~\cite[(7.14)]{Ma-To} that for doubling weights $w$ on $[-1,1]$ (in particular, for $w\equiv1$) and any $x\in[-1,1]$
\begin{equation}\label{eqn-varga}
\frac1c\int_{I_x}w(t)dt
\le \lambda_n([-1,1],w,x)
\le c\int_{I_x}w(t)dt,
\quad I_x=[x-\rho_n(x),x+\rho_n(x)]\cap[-1,1], 
\end{equation}
where $\rho_n(x)=n^{-2}+n^{-1}\sqrt{1-x^2}$ and $c$ depends on the doubling constant of $w$.
The estimates of behavior of Christoffel function are more useful in the sense that they allow to deduce bounds on asymptotics (up to a constant) and, for example, to compute the order of $\min_{\x\in D} \lambda_n(D,\x)$ as a function of $n$, which is not possible to imply from typical results on asymptotics. The quantity $\min_{\x\in D} \lambda_n(D,\x)$ is crucial for Nikol'skii inequalities (see~\cite{Di-Pr}) and has other applications, for instance, in analysis of least square approximation~\cite{Co-Da-Le}. 

For one dimension, as illustrated by~\eqref{eqn-varga}, the quantity $\rho_n(x)$ properly accounts for the boundary effect. In~\cite{Kr}, for the so-called $C^\alpha$ domains (where $\alpha$ reflects certain smoothness of the boundary), it is shown how the distance to the boundary (defined in terms of Minkowski functional) can be used for estimates of Christoffel function. In this paper, we work with general convex bodies (convex compact sets with non-empty interior) without any smoothness assumptions and show that apart from the distance to the boundary one can look at certain measurements of the size of an appropriate hyperplane section of the body to obtain precise upper bounds on Christoffel function.

In what follows, $c$, $c_0$, $c_1$, etc. denotes positive absolute constants, and  $c(\cdot)$ denotes positive constants depending only on parameters indicated in parentheses. These constants may be different at different occurrences even if the same notation is used. We write $A\approx B$ if $c_0 A \le B \le c_1 A$ for any values of variables that define the quantities $A$ and $B$. We always assume that $n$ is a positive integer. By $\partial D$ we denote the boundary of $D$ and also set $\dist(\x,Y):=\inf\{|\x-\y|:\y\in Y\}$.

Now let us state the main result for two dimensions.
\begin{theorem}\label{main_lower_thm}
Suppose a planar convex body $D$ is contained in a disc of radius $R$, and for some $\x\in D\setminus\partial D$ and unit vector $\u\in\R^2$ there are $r>0$ and $t_0<0$ such that $rB^2+\x+t_0\u\subset D$. Let $\delta:=\max\{t:\x+t\u\in D\}$ and $l_i:=\max\{t:\x+(-1)^it\v\in D\}$, $i=1,2$, where $\v$ is one of the two unit vectors orthogonal to $\u$. If $\delta\ge \sigma n^{-2}$, $\sigma>0$, then
\begin{equation}\label{main_lower_est}
\lambda_n(D,\x)\le c(r,R,\sigma) n^{-2}\sqrt{\min\{l_1l_2,\delta\}}.
\end{equation}
\end{theorem}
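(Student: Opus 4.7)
The plan is to verify \eqref{main_lower_est} by exhibiting, for every $n\ge1$, an admissible polynomial $f\in\P_n$ with $f(\x)=1$ and $\int_D f^2\le c(r,R,\sigma)\,n^{-2}\sqrt{\min\{l_1l_2,\delta\}}$; the theorem then follows from \eqref{def_lambda}. After a rigid motion one may assume $\x=\zero$, $\u=e_1$, $\v=e_2$, so that $t_0 e_1+rB^2\subset D\subset \mathbf c+RB^2$ for some $\mathbf c\in\R^2$, with $t_0<0$ and $\delta\ge\sigma n^{-2}$.

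The candidate polynomial is a tensor-product-like object $f(\y)=P(y_1)Q(\y)$ with $\deg P+\deg Q\le n$. The univariate factor $P$ is the extremal polynomial in the classical one-dimensional Christoffel problem on the $\u$-chord $[a_-,\delta]:=\{t:(t,0)\in D\}$ at the origin; here $a_-\le t_0-r<0$. Using the standard one-variable estimate
\[
\lambda_m([a,b],x)\approx m^{-1}\sqrt{(b-x)(x-a)}+m^{-2}(b-a),
\]
together with $|a_-|\ge r$ and $\delta\ge\sigma n^{-2}$, one produces $P$ of degree $\le\lfloor n/2\rfloor$ with $P(0)=1$, $|P|\le1$ on $[a_-,\delta]$, and $\int_{a_-}^{\delta}P^2\le c(r,R,\sigma)\,n^{-1}\sqrt\delta$.

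The choice of $Q$ splits according to which of $l_1l_2$ and $\delta$ is smaller. When $l_1l_2\ge\delta$, $Q$ may be taken as another univariate needle in $y_2$ on the smallest interval containing the $y_2$-projection of $D$ (of length $\le 2R$); Fubini over the enclosing axis-parallel rectangle then gives $\int_D f^2\le c(r,R,\sigma)\,n^{-2}\sqrt\delta=c(r,R,\sigma)\,n^{-2}\sqrt{\min\{l_1l_2,\delta\}}$. In the pinched case $l_1l_2<\delta$, however, the extra factor $\sqrt{l_1l_2/\delta}$ has to be extracted from the geometry at $\x$: here $Q$ is taken genuinely bivariate, of the form $(\delta-y_1)^{k}q(y_2/(\delta-y_1))$ (or a $y_2^2/((\delta-y_1)(y_1-a_-))$-variant for non-smooth boundaries), so that $Q$ remains polynomial of degree $k$ (respectively $2k$) while the argument of $q$ stays bounded on $D$. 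After the change of variable $u=y_2/(\delta-y_1)$, the inner integral becomes a one-dimensional (possibly Jacobi-weighted) Christoffel problem for $q$ whose optimal value supplies the missing factor.

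The main obstacle is the pinched case. The change of variables is only legitimate after establishing a uniform geometric inequality of the shape $|y_2|\le c(r,R)\,\phi(y_1)$ on $D$, where $\phi$ is a nonnegative polynomial vanishing at $a_-$ and $\delta$ and of size compatible with the required $\sqrt{l_1l_2}$ factor on the support of $P$; convexity of $D$ combined with the anchor disc $t_0 e_1+rB^2\subset D$ is precisely what makes such an inequality available. Once this geometric comparison is in place, balancing the degrees of $P$ and $q$ under the constraint $\deg P+\deg Q\le n$ is a routine optimization, and multiplying the two $L^2$ estimates completes the upper bound through \eqref{def_lambda}.
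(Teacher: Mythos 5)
Your overall structure (split according to whether $l_1l_2$ or $\delta$ is smaller, and construct a ``needle'' polynomial in each case) matches the paper's, but both branches of the construction have real gaps, and the paper's actual method is different in a way that matters.

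\textbf{The $l_1l_2\ge\delta$ case.} You build $P$ as a Christoffel needle on the $\u$-chord $[a_-,\delta]$, take $Q$ to be a needle on the $y_2$-projection, and claim Fubini over the bounding box gives $\int_D P^2Q^2\le c\,n^{-2}\sqrt\delta$. This does not work: the chord $[a_-,\delta]=\{t:(t,0)\in D\}$ is generally a \emph{strict} subset of the $y_1$-projection $[b_-,b_+]$ of $D$ (for instance, take $D$ a disc centred off the $y_1$-axis, with $\x=\zero$: then $b_+>\delta$). The needle $P$ is normalized on $[a_-,\delta]$, so $|P|$ grows rapidly on $[\delta,b_+]$ and $\int_D P^2Q^2$ picks up an uncontrolled contribution from the part of $D$ with $y_1>\delta$. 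If instead you build $P$ on $[b_-,b_+]$, the one-dimensional estimate gives only $\approx n^{-1}\sqrt{|b_-|\,b_+}$, not $n^{-1}\sqrt\delta$. The paper's Lemma~\ref{nd_root_delta} handles exactly this point: one must use the supporting hyperplane at $\x+\delta\u$, whose normal is not $\u$ in general, and this forces a \emph{tilted} affine image of the cube rather than an axis-parallel rectangle. Your tensor product is the axis-parallel special case, and it is precisely the case that fails.

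\textbf{The pinched case $l_1l_2<\delta$.} You openly defer the key geometric step, namely an inequality $|y_2|\le c(r,R)\,\phi(y_1)$ on $D$ with $\phi$ a fixed low-degree polynomial vanishing at $a_-$ and $\delta$. That inequality is false for the class of bodies allowed here. If the boundary is smooth near $\x+\delta\u$ (e.g.\ $D$ an ellipse), then $|y_2|\approx\sqrt{\delta-y_1}$ near that point and $y_2/(\delta-y_1)$ is unbounded, so the ``cone'' variant fails. If the slice $\{y\in D:y_1=a_-\}$ has positive length (which happens whenever the boundary point $(a_-,0)$ is not the leftmost point of $D$ --- e.g.\ a half-ellipse truncated by a vertical line), then $|y_2|>0$ where $\phi(a_-)=0$, so the ``Jacobi'' variant $y_2^2/((\delta-y_1)(y_1-a_-))$ is also unbounded. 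The anchor disc $rB^2+\x+t_0\u\subset D$ does not rescue either form; it only controls things near $y_1=t_0$, not near $y_1=a_-$ or $y_1=\delta$.

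\textbf{What the paper does instead.} Rather than a tensor product or a change of variables, the paper applies \cite[Theorem~6.3]{Di-Pr} (Lemma~\ref{lower_lemma}) with an explicitly constructed parallelogram $T([-1,1]^2)\supset D$. Working in coordinates with $\u$ vertical, it takes the lines $q_i$ through $\x+\delta\u$ and $\x+(-1)^il_i\v$, shifts them outward (via auxiliary intersections $P_i$, constructed using the points $(\pm r,0)$ from the anchor disc) to lines $s_i$ that still bound $D$, and closes up the parallelogram with parallel supporting lines $t_i$. The two quantitative facts --- the angle $\phi$ between $q_1,q_2$ satisfies $\sin\phi\gtrsim\delta/l_1+\delta/l_2$, and $\dist(\x,s_i)\lesssim l_i\sin\phi$ --- feed into Lemma~\ref{lower_lemma} as $|\det T|\lesssim1/\sin\phi$ and $1-y_i\lesssim l_i\sin\phi$, producing $n^{-2}\sqrt{l_1l_2}$ after cancellation of the $\sin\phi$'s. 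This is exactly the flexibility (a non-axis-parallel affine image of the cube, with $\x$ near a corner) that your construction lacks. If you try to repair your argument by allowing a general affine change of variables before the needle construction, you will essentially be re-deriving the paper's proof.
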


\begin{remark}\label{constructive}
Proof of Theorem~\ref{main_lower_thm} is constructive, i.e., following this proof and that of~\cite[Theorem~6.3]{Di-Pr}, one can explicitly construct the polynomials of degree $n$ with $P_n(\x)=1$ and $\|P_n\|_{L_2(D)}^2\le c(r) n^{-2}\sqrt{\min\{l_1l_2,\delta\}}$ (see~\eqref{def_lambda}).
\end{remark}

\begin{remark}
	The constant in~\eqref{main_lower_est} depends on $r$ as $r\to0+$ and does not depend on $t_0$. Alternatively, instead of requiring that $rB^2+\x+t_0\u\subset D$, one can define $r$ as follows:
	\[
	r=\sup_{\y\in D\cap\{\x+t\u:t<0\}} \dist(\y,\partial D).
	\] 
\end{remark}

Restriction $\delta\ge\sigma n^{-2}$ is not essential and was imposed only to simplify the statements of the results (in particular, to allow writing $n^{-1}\sqrt{\delta}$ rather than $n^{-2}+n^{-1}\sqrt{\delta}$). More precisely, the next proposition shows that one can always step towards inside the domain by an order of $n^{-2}$ leading to no change in the order of Christoffel function. We call $D\subset\R^d$ a star-like body in $\R^d$ (with respect to the origin), if $D$ is a compact set with non-empty interior and $tD\subset D$ for any $t\in[0,1]$.
\begin{proposition}\label{boundary}
If $D$ is a star-like body in $\R^d$, then for any point $\x\in D$
\[
\lambda_n(D, \x)\approx \lambda_n(D,\mu \x), \quad \mu\in[1-c(d)n^{-2},1],
\]
where $c(d)=2^{-3-d/2}$ (recall that the constants in the equivalence notation ``$\approx$'' are absolute).
\end{proposition}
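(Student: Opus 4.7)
The plan is to establish the two one-sided inequalities $\lambda_n(D,\x)\le C_1\lambda_n(D,\mu\x)$ and $\lambda_n(D,\mu\x)\le C_2\lambda_n(D,\x)$ with absolute constants. The first is a direct dilation argument exploiting star-likeness; the second is subtler and will be obtained by bootstrapping the first.

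For the first inequality I take an extremal $P\in\P_n$ for $\lambda_n(D,\mu\x)$ (so $P(\mu\x)=1$ and $\int_D P^2=\lambda_n(D,\mu\x)$) and set $Q(\y):=P(\mu\y)$. Then $Q\in\P_n$ with $Q(\x)=1$, and since $\mu D\subset D$ by star-likeness, a change of variables gives
\[
\int_D Q(\y)^2\,d\y=\mu^{-d}\int_{\mu D}P(\z)^2\,d\z\le\mu^{-d}\lambda_n(D,\mu\x).
\]
For $\mu\ge 1-c(d)n^{-2}$ with $c(d)=2^{-3-d/2}$, Bernoulli's inequality gives $\mu^d\ge 1-d\cdot 2^{-3-d/2}$, and the function $d\mapsto d\cdot 2^{-3-d/2}$ is easily seen to be bounded on $d\ge 1$ by an absolute constant strictly less than~$1$, so $\mu^{-d}$ is bounded above by an absolute constant.

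For the reverse inequality I let $P$ be extremal for $\x$ and aim to show $|P(\mu\x)|\ge 1/2$; once this is in hand, $Q:=P/P(\mu\x)$ is a valid competitor witnessing $\lambda_n(D,\mu\x)\le 4\lambda_n(D,\x)$. The needed pointwise bound on $P$ comes from applying the first direction with an arbitrary $t\in(0,1]$ in place of $\mu$, which gives $\lambda_n(D,t\x)\ge t^d\lambda_n(D,\x)$; combined with the general extremality bound $|P(t\x)|^2\le\int_D P^2/\lambda_n(D,t\x)$ drawn from~\eqref{def_lambda}, this forces $|P(t\x)|\le t^{-d/2}$. Hence the univariate polynomial $h(t):=P(t\x)$ of degree $\le n$ satisfies $\|h\|_{L_\infty[1/2,1]}\le 2^{d/2}$. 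Markov's inequality on $[1/2,1]$ then yields $\|h'\|_{L_\infty[1/2,1]}\le 4n^{2}\cdot 2^{d/2}$, and since $\mu\ge 1-2^{-3-d/2}n^{-2}\ge 1/2$ we may integrate from $\mu$ to $1$ to obtain
\[
|h(\mu)-1|\le 4n^{2}\cdot 2^{d/2}\cdot(1-\mu)\le 4\cdot 2^{d/2}\cdot 2^{-3-d/2}=\tfrac12.
\]

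The main obstacle I anticipate is precisely this reverse direction: the naive choice $Q(\y):=P(\y/\mu)$ integrates against $D/\mu\supset D$, and star-likeness alone provides no mechanism to control the excess. The bootstrap above circumvents this by using the first (easy) direction to supply an internal pointwise bound for $P$ on the radial segment $[0,\x]\subset D$ (available precisely because $D$ is star-like), after which a standard one-variable Markov estimate delivers the required lower bound on $|P(\mu\x)|$. The specific value $c(d)=2^{-3-d/2}$ is exactly what is needed so that the $2^{d/2}$ loss from the pointwise bound is balanced against the $n^{-2}$ gain from Markov's inequality on $[1/2,1]$.
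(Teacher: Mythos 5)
Your proof is correct and takes essentially the same route as the paper's. The first direction reproduces the paper's Lemma~\ref{almost-increase} (the dilation $Q(\y)=P(\mu\y)$ versus the paper's comparison $\lambda_n(D,\x)\le\lambda_n(\mu^{-1}D,\x)=\mu^{-d}\lambda_n(D,\mu\x)$ are the same computation), and the second direction mirrors the paper's bootstrap: you deduce the pointwise bound $|P(t\x)|\le t^{-d/2}$ on the radial segment $[\x/2,\x]$ from the easy direction and extremality, whereas the paper packages the same information as the bound $M:=\|P\|_{L_\infty(\{t\x:t\in[1/2,1]\})}\le 2^{d/2}$ attained at a single point; both then apply Markov's inequality to $h(t)=P(t\x)$ on $[1/2,1]$ to conclude $P(\mu\x)\ge 1/2$ and hence $\lambda_n(D,\mu\x)\le 4\lambda_n(D,\x)$.
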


The following theorem shows that the bound in Theorem~\ref{main_lower_thm} is sharp in the class of convex bodies if we only use measurements $\delta$ and $l_i$, $i=1,2$. Let us remark that under the conditions of Theorem~\ref{main_lower_thm} it is not hard to see that by convexity of $D$ we always have
\begin{equation}\label{li_delta}
\frac{r\delta}{\delta-t_0}B^2+\x\subset D, \quad\text{so}\quad
l_i\ge \frac{r\delta}{\delta-t_0}\ge \frac{r}{2R}\delta, \quad i=1,2.
\end{equation}

\begin{theorem}\label{upper_thm}
For any positive $l_1$, $l_2$, $\delta$ with $10 \delta < l_1, l_2 < \frac1{10}$, one can find a planar convex body $D$ and a point $\x\in D$ satisfying $B^2 \subset D\subset 4B^2$ and with $\u:=\x/|\x|$ that $\delta=\max\{t:\x+t\u\in D\}$ and $l_i=\max\{t:\x+(-1)^it\v\in D\}$, $i=1,2$, where $\v$ is one of the two unit vectors orthogonal to $\u$, and that for any $n$ with $\delta>\sigma n^{-2}$, $\sigma>0$, the following inequality holds:
\begin{equation}\label{upper_est}
\lambda_n(D,\x)\ge c(\sigma) n^{-2}\sqrt{\min\{l_1l_2,\delta\}}.
\end{equation}
\end{theorem}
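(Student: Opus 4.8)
The plan is to construct $D$ explicitly as a small perturbation of the unit disc and to bound $\lambda_n(D,\x)$ from below using two elementary facts: monotonicity, $\lambda_n(D,\x)\ge\lambda_n(D',\x)$ whenever $\x\in D'\subseteq D$ (restricting the integral in~\eqref{def_lambda} to a subdomain only decreases it), and affine invariance, $\lambda_n(A(D'),A\x)=|\det A|\,\lambda_n(D',\x)$ for an affine map $A$. Hence it suffices to inscribe in $D$, through $\x$, a disc or an ellipse whose Christoffel function is already understood, and for that one invokes the standard lower bound $\lambda_n(B^2,\y)\ge c\,n^{-2}\sqrt{1-|\y|^2}$, valid for all $\y\in B^2$, a consequence of~\eqref{ball_as}; the parameter $\sigma$, if it enters at all, does so only through the requirement that the relevant distance to $\partial D$ be $\gtrsim n^{-2}$.

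Work in coordinates with $\u=(1,0)$ and $\v=(0,1)$, and put $\x=(s,0)$ with $s$ close to $1$. Two regimes appear according to the size of $\delta$ relative to $l_1l_2$. If $\delta$ is small compared with $l_1l_2$, one keeps $\x$ inside $B^2$: take $D$ to be $B^2$ together with either a thin radial protrusion (a single external vertex $(s+\delta,0)$) or two small ``ears'' near $\x$, with $s$ and the ears chosen so that the extent of $D$ from $\x$ in the direction $\u$ is exactly $\delta$ and the half-widths in the direction $\v$ are exactly $l_1,l_2$; in all such cases $B^2\subseteq D\subseteq\tfrac{11}{10}B^2\subseteq 4B^2$ and $1-|\x|^2\ge\min\{l_1l_2,\delta\}$. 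If $\delta$ is large compared with $l_1l_2$, take instead $D=\conv\bigl(B^2\cup\{(p,0)\}\bigr)$ with $p>1$ chosen so that the two tangent lines from $(p,0)$ to $B^2$ cut out the prescribed half-widths at $\x=(p-\delta,0)$; now $\x$ sits at the sharp end of $D$ and need not belong to $B^2$. The hypotheses $10\delta<l_i<\tfrac1{10}$ are exactly what make these choices consistent and keep $p$ in range; the case $l_1\neq l_2$ is handled by a corresponding asymmetric modification, which affects neither the order of $l_1l_2$ nor that of $\lambda_n(D,\x)$.

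In the first regime, since $\x\in B^2$ and $1-|\x|^2\ge\min\{l_1l_2,\delta\}$,
\[
\lambda_n(D,\x)\ \ge\ \lambda_n(B^2,\x)\ \ge\ c\,n^{-2}\sqrt{1-|\x|^2}\ \ge\ c\,n^{-2}\sqrt{\min\{l_1l_2,\delta\}}.
\]
In the second regime $\min\{l_1l_2,\delta\}=l_1l_2$, and I inscribe in $D$ an ellipse $E$ with one semi-axis $\approx 1$ along $\u$ and the other $\approx\sqrt{l_1l_2/\delta}$ along $\v$, centred on the axis with its vertex a distance $\approx\delta$ beyond $\x$, inside the wedge bounded by the two tangent lines of $\partial D$. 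Checking $E\subseteq D$ reduces to comparing the half-width of $E$ with $(p-x_1)/\sqrt{p^2-1}$ on the tangent part of $\partial D$ and with $\sqrt{1-x_1^2}$ on the circular part; the binding instance occurs precisely over $\x$ and fixes the minor semi-axis. Writing $E=A(B^2)$, one finds $1-|A^{-1}\x|^2\approx\delta$, so
\[
\lambda_n(D,\x)\ \ge\ \lambda_n(E,\x)\ =\ |\det A|\,\lambda_n(B^2,A^{-1}\x)\ \gtrsim\ \sqrt{\tfrac{l_1l_2}{\delta}}\cdot n^{-2}\sqrt{\delta}\ =\ c\,n^{-2}\sqrt{l_1l_2},
\]
which is the desired bound.

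The step I expect to be the main obstacle is this second regime: one must keep $D$ convex and trapped between $B^2$ and $4B^2$ with the three measurements hitting their prescribed values exactly, while simultaneously fitting the inscribed ellipse $E$ inside the narrow wedge at the tip of $D$ and placing $\x$ far enough from $\partial E$ in the radial direction that $1-|A^{-1}\x|^2$ is comparable to (and not much smaller than) $\delta$ — it is this last point that produces the full factor $\sqrt{l_1l_2}$ rather than something smaller. Because the requirement $B^2\subseteq D$ rigidly constrains the shape of $D$ near $\x$, threading $E$ between the circular and the tangent parts of $\partial D$ is the delicate part of the argument.
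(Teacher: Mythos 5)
Your regime-2 idea --- inscribe an affine image of a disc through $\x$ and combine monotonicity with affine invariance --- is exactly the paper's mechanism, but your regime-1 argument has a genuine gap. Since $B^2\subset D$ and $\x=(s,0)$ with $s<1$, the points $(s,\pm\sqrt{1-s^2})$ lie in $D$, so necessarily $l_1,l_2\ge\sqrt{1-|\x|^2}$, i.e.\ $1-|\x|^2\le\min\{l_1,l_2\}^2$. Take, say, $l_1=20\delta$ and $l_2=1/11$ with $\delta$ small: then $l_1l_2>\delta$ (your first regime), yet $\min\{l_1,l_2\}^2=400\delta^2\ll\delta$, so your claimed inequality $1-|\x|^2\ge\min\{l_1l_2,\delta\}=\delta$ is unattainable, and $\lambda_n(B^2,\x)\le c\,n^{-2}\sqrt{1-|\x|^2}\le c\,n^{-2}\cdot 20\delta$ falls short of the target $c\,n^{-2}\sqrt{\delta}$ by the unbounded factor $\sqrt{\delta}$. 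No arrangement of protrusions or ears can repair this, because the obstruction comes from the requirement $B^2\subset D$ itself; one must inscribe a larger affine disc through $\x$ in this regime as well. The paper does this uniformly in both regimes: it takes the chord of $2B^2$ through $(2-\delta,0)$ of adjustable slope, whose half-lengths $m_1,m_2$ satisfy $m_1m_2=\delta(4-\delta)$ and whose ratio $m_2/m_1$ is matched to $l_2/l_1$ by an intermediate-value argument, then applies the shear-and-compression $T(x,y)=(x+\alpha y,\mu y)$ with $\mu=\sqrt{l_1l_2/(\delta(4-\delta))}$, and sets $D=\conv(T(2B^2)\cup B^2)$; monotonicity and affine invariance give $\lambda_n(D,\x)\ge\lambda_n(T(2B^2),\x)=\mu\,\lambda_n(2B^2,(2-\delta,0))\ge c(\sigma)n^{-2}\sqrt{l_1l_2}$. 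That same shear is what handles $l_1\ne l_2$, which you defer to ``a corresponding asymmetric modification''; hitting both half-widths exactly while preserving $\delta$ is not routine and is precisely what the continuity argument is for.

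Two further points. In your second regime the ellipse as literally described does not fit: if its vertex sits at the apex $(p,0)$ of the wedge (which is at distance exactly $\delta$ beyond $\x$), then at distance $t$ from the apex the wedge has half-width proportional to $t$ while the ellipse has half-width proportional to $\sqrt{t}$, so the ellipse necessarily leaves $D$ near the apex. You must retract the vertex to distance about $\delta/2$ beyond $\x$; this still yields $1-|A^{-1}\x|^2\approx\delta$ and makes the tangency occur over $\x$, as you anticipate, after which this branch does work and is essentially the paper's construction with the roles of $D$ and the inscribed ellipse interchanged. Finally, the lower bound $\lambda_n(B^2,\y)\ge c\,n^{-2}\sqrt{1-|\y|^2}$ is not a consequence of~\eqref{ball_as}: that asymptotic relation is not known to be uniform in $\y$, which is exactly why the paper proves the uniform estimate~\eqref{ball} separately, valid for $\y\in(1-\sigma n^{-2})B^2$ --- and this restriction is where the dependence on $\sigma$ genuinely enters.
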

Note that Theorem~\ref{main_lower_thm} is applicable for $D$ and $\x$ from Theorem~\ref{upper_thm} with $r=1$ and $R=4$.
\begin{remark}
The converse~\eqref{main_lower_est} of~\eqref{upper_est} is true for any convex body. We believe that the class of convex bodies for which~\eqref{upper_est} holds (for any $\x$ with $\delta>\sigma n^{-2}$) is rather wide, however, that it does not include all convex bodies. In other words, to compute the order of $\lambda_n(D,\x)$ for arbitrary convex $D$, one must use more measurements than only $\delta$ and $l_i$, $i=1,2$. Alternatively, one could restrict the class of considered bodies and impose some additional conditions apart from convexity.
\end{remark}

In $\R^2$, a hyperplane section of a planar convex body is a segment, and along with a point on this segment, such a configuration can be completely described by two parameters as we have done with $l_1$ and $l_2$ above. For $\R^d$, $d>2$, a hyperplane section of a convex body in $\R^d$ is a convex body in $\R^{d-1}$, which makes things much more complicated. Nevertheless, simply the $(d-1)$-volume of an appropriate hyperplane section of the body can be used for a quite precise (as confirmed by examples and sharpness) upper bound on Christoffel function.

Let ${\rm Vol}_{k}(\cdot)$ be the $k$-dimensional volume. Now we can state our main result for higher dimensions.
\begin{theorem}\label{mult}
Suppose a convex body $D\subset\R^d$ contains a ball of radius $r$ and is contained in a ball of radius $R$. For any $\x\in D\setminus \partial D$, let $\u\in \R^d$ be a unit vector such that for some $\nu\ge 1$ 
\begin{equation}\label{delta-bound}
\delta:=\max\{t:\x+t\u\in D\} \le \nu \dist(\x,\partial D)
\end{equation}
 and the hyperplane passing through $\x+\delta\u$ with normal vector $\u$ is supporting to $D$.
If $\delta\ge \sigma n^{-2}$, $\sigma>0$, then
\begin{equation}\label{mult_est}
\lambda_n(D,\x) \le c(d,r,R,\nu,\sigma) n^{-d} \min\{\sqrt{\delta}, \delta^{1-d/2} {\rm Vol}_{d-1}(\{\y\in D: (\x-\y)\perp \u\})\}.
\end{equation}
\end{theorem}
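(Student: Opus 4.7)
The plan is to prove Theorem~\ref{mult} by explicitly constructing, for $\x \in D\setminus\partial D$, a polynomial $P_n \in \P_n$ with $P_n(\x)=1$ and $\|P_n\|_{L_2(D)}^2$ bounded by the right-hand side of~\eqref{mult_est}; the theorem then follows from the extremal characterization~\eqref{def_lambda}. After translating so that $\x=\zero$ and rotating so that $\u=e_d$, write $\y=(y',y_d)$ with $y'\in\R^{d-1}$ and take a product ansatz
\[
P_n(\y)=q(y_d)\,Q(y'),\qquad \deg q,\,\deg Q\le \lfloor n/2\rfloor.
\]

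For the longitudinal factor $q$, note that the segment $\{t\u:t\in[-\delta',\delta]\}$ lies in $D$ with $\delta'=\max\{t:-t\u\in D\}\in[\delta/\nu,2R]$ (the lower bound coming from $\dist(\x,\partial D)\ge \delta/\nu$ via~\eqref{delta-bound}). Applying the sharp 1D Christoffel estimate~\eqref{eqn-varga} to this segment produces $q$ of degree $\lfloor n/2\rfloor$ with $q(0)=1$ and
\[
\int_{-\delta'}^{\delta} q^2(s)\,ds\le c(R,\nu,\sigma)\,n^{-1}\sqrt{\delta};
\]
the explicit Christoffel--Darboux construction moreover yields pointwise decay of the form $q^2(s)\le c\,n^{-1}\sqrt{\delta}\,(1+n^2 s^2/\delta)^{-1}$, which is indispensable for taming contributions of slices at $y_d$ far from $0$.

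The transverse factor $Q$ is chosen by cases according to which term in the minimum dominates. In the \emph{bulk case} ($V\ge\delta^{(d-1)/2}$, equivalently $\sqrt{\delta}\le \delta^{1-d/2}V$), I let $Q$ be the $(d-1)$-dimensional Christoffel polynomial at $\zero$ of the ball $2RB^{d-1}$ (which contains the projection of $D$ onto $\u^\perp$): then $Q(\zero)=1$ and $\int_{2RB^{d-1}}Q^2\le c(d,R)\,n^{-(d-1)}$, so separating variables gives $\int_D P_n^2\le c\,n^{-d}\sqrt{\delta}$. In the \emph{thin case} ($V<\delta^{(d-1)/2}$), I adapt $Q$ to the cross-section $S=D\cap\u^\perp$, a $(d-1)$-dim convex body of volume $V$ containing a $(d-1)$-ball of radius $\delta/\nu$ at the origin. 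Applying the $(d-1)$-dim analog of the present theorem (by induction on $d$, with base case given by Theorem~\ref{main_lower_thm}) to $S$ at the origin produces $Q\in\P_{\lfloor n/2\rfloor,d-1}$ with $Q(\zero)=1$, $\int_S Q^2\le c\,V\,n^{-(d-1)}$, together with a Nikol'skii-type uniform bound for $|Q|$ over the projection of $D$ onto $\u^\perp$.

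The hard part is completing the thin case, namely bounding $\int_D P_n^2=\int q^2(y_d)\int_{D_{y_d}}Q^2(y')\,dy'\,dy_d$ when the slices $D_{y_d}$ may substantially exceed $S$ for $y_d<0$. The plan is to split the $y_d$-integral dyadically around $0$: on $y_d\in[0,\delta]$ use Brunn's inequality together with the supporting hyperplane at $y_d=\delta$ to control slice volumes and their transverse position relative to $S$, while on $y_d\in[-\delta',0)$ rely on the uniform $L_\infty$ bound on $Q$ combined with the sharp $(n s)^{-2}$-type decay of $q^2$. A careful balance of these two regions should produce exactly the factor $\delta^{1-d/2}V$ rather than the naive product bound $V\sqrt{\delta}$ that a purely separated estimate would suggest, and thereby deliver~\eqref{mult_est}.
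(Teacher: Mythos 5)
Your proposal takes a genuinely different route from the paper---an explicit tensor-product needle $P_n(\y)=q(y_d)Q(y')$ rather than the paper's circumscribed parallelotop fed into Lemma~\ref{lower_lemma}---but the ``thin case'' has a fatal gap that the proposed dyadic balancing cannot repair. The bulk case is fine (it simply reproduces Lemma~\ref{nd_root_delta}). In the thin case, the transverse polynomial $Q$ built on the cross-section $S=D\cap\u^\perp$ cannot be uniformly controlled on the slices $D_{y_d}$ for $y_d<0$, because those slices flare: the slice at $y_d=-s$ can be linearly a factor of order $1+s/\delta$ larger than $S$ (e.g.\ when $D$ is a cone with apex at $\x+\delta\u$), and a degree-$\lfloor n/2\rfloor$ polynomial concentrated on $S$ typically grows like $(1+s/\delta)^{n/2}$ on such an inflated copy, i.e.\ exponentially in $n$ as soon as $s\gtrsim\delta$. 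The pointwise decay $q^2(-s)\le c\,n^{-1}\sqrt{\delta}\,(1+n^2 s^2/\delta)^{-1}$ you invoke is only polynomial in $n$ and cannot absorb this; already for $D=B^2$ with $\delta\sim n^{-2}$ and $s\sim 1$, the factor $Q^2$ on the slice $D_{-s}$ is of size $n^{n}$ while $q^2(-s)$ is merely a negative power of $n$.

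The paper's construction survives exactly because its needle polynomial is a product in \emph{sheared} coordinates: the affine map $T$ fed into Lemma~\ref{lower_lemma} is chosen so that the hyperplanes $T(\{z_i=1\})$, $i=1,\dots,d$, all pass through a fixed translate of the apex $\x+\delta\u$ (the corner $K_2$), so that in the $T^{-1}$-coordinates the slices do not flare and every one-dimensional factor stays bounded on $[-1,1]$. The resulting polynomial, though a tensor product in those coordinates, is emphatically \emph{not} of the form $q(y_d)Q(y')$ in the original variables. To salvage your ansatz you would essentially need $Q$ to be a polynomial in $y'/(\delta-y_d)$, which is not a polynomial of bounded degree---precisely the obstruction that a linear shear is designed to overcome. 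A secondary issue is that $\int_S Q^2\le cVn^{-(d-1)}$ does not follow directly from the $(d-1)$-dimensional inductive hypothesis applied at $\zero$: the inductive bound involves the position of $\zero$ inside $S$ and the measurements of slices of $S$, not simply ${\rm Vol}_{d-1}(S)$, and $\zero$ need not lie near the John center of $S$.
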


\begin{remark}
	The required choice of $\u$ is always possible even with $\nu=1$, namely, when $\delta=\dist(\x,\partial D)$ and $\u$ such that $\x+\delta\u\in\partial D$. Allowing $\nu>1$ gives more flexibility in the choice of the direction $\u$, for example of such application see the proof of Theorem~\ref{thm_example_half_ball}.
\end{remark}

\begin{remark}
Remark similar to Remark~\ref{constructive} holds about Theorem~\ref{mult}.
\end{remark}

\begin{remark} The main idea of the proofs of both of the main results Theorem~\ref{main_lower_thm} and Theorem~\ref{mult} relies on application of~\cite[Theorem~6.3]{Di-Pr}, which uses a parallelotop (an affine image of the cube) containing the body. Informally, in geometric language, one seeks such a circumscribed parallelotop having small volume and one of the vertices close to the point where the estimate of Christoffel function is sought. Our proofs describe an efficient way of constructing the corresponding affine transform of the cube, and so provide a relief from the need to optimize over a very large family of possible affine transforms for which~\cite[Theorem~6.3]{Di-Pr} is applicable.
\end{remark}

The common part of~\eqref{main_lower_est} and~\eqref{mult_est} is valid under somewhat milder hypothesis. Let us state this separately as a lemma.
\begin{lemma}\label{nd_root_delta}
	Suppose a convex body $D\subset\R^d$ is contained in a ball of radius $R$. For any $\x\in D\setminus \partial D$, let $\u\in \R^d$ be a unit vector such that 	
\begin{equation}\label{delta-in-lemma}
	\delta:=\max\{t:\x+t\u\in D\} \le \nu \dist(\x,\partial D)
\end{equation}
	for some $\nu\ge1$.
	If $\delta\ge \sigma n^{-2}$, $\sigma>0$, then
	\begin{equation}\label{root_delta}
	\lambda_n(D,\x)\le c(R,d,\nu,\sigma) n^{-d}\sqrt{\delta}.
	\end{equation}
\end{lemma}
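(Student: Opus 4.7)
My plan is to produce a parallelotope $P \supset D$ in which $\x$ is close to one face in a distinguished direction, and then invoke~\cite[Theorem~6.3]{Di-Pr} to bound $\lambda_n(D, \x) \le \lambda_n(P, \x)$ by a product of one-dimensional Christoffel functions. The crucial observation is that the ``obvious'' choice of using $\u$ itself as the distinguished axis fails, because the projection of $D-\x$ onto the line through $\u$ can exceed $\delta$ (e.g.\ a long thin triangle with $\x$ near a short leg and $\u$ along the long direction). Replacing $\u$ by the supporting-hyperplane normal at $\x + \delta\u$ resolves this.

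First, since $\x + \delta\u \in \partial D$ by maximality of $\delta$, I would pick a supporting hyperplane of $D$ at $\x + \delta\u$ with unit outer normal $\n$, giving $D \subset \{\z: \n\cdot(\z - \x - \delta\u) \le 0\}$. Because $\x$ is interior to $D$, the quantity $h := \delta\,\n\cdot\u$ is strictly positive (else the hyperplane would pass through $\x$, contradicting interiority), and trivially $h \le \delta$. Extend $\n$ to an orthonormal basis $\{\n, \w_2, \dots, \w_d\}$ of $\R^d$, and let $[m_i, M_i]$ be the projection of $D - \x$ onto the $i$-th basis vector. Set $P := \x + [m_1, M_1] \times \dots \times [m_d, M_d]$ (axis-aligned in this basis). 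The supporting hyperplane forces $M_1 \le h \le \delta$; $D \subset RB^d$ forces $|m_i|, |M_i| \le 2R$; and $0 \in [m_i, M_i]$ because $\x \in D$.

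Next, applying~\cite[Theorem~6.3]{Di-Pr} to $P$ (or equivalently taking a tensor product of one-dimensional extremal polynomials of degree $\lfloor n/d \rfloor$ along each axis) yields
\[
\lambda_n(D, \x) \le \lambda_n(P, \x) \le c(d) \prod_{i=1}^d \lambda^{1D}_{\lfloor n/d \rfloor}([m_i, M_i], 0).
\]
Combined with the standard 1D upper bound $\lambda^{1D}_n([a,b], x) \le c\bigl(n^{-1}\sqrt{(x-a)(b-x)} + (b-a)\,n^{-2}\bigr)$ obtained from~\eqref{eqn-varga}, the $i=1$ factor is at most $c(R, d, \sigma)\,n^{-1}\sqrt\delta$ (using $M_1 \le \delta$, $|m_1| \le 2R$, and $\delta \ge \sigma n^{-2}$ to absorb the $n^{-2}$ term into $n^{-1}\sqrt\delta$), while each of the remaining $d-1$ factors is at most $c(R, d)\,n^{-1}$. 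Multiplying the $d$ factors produces~\eqref{root_delta}; the $\nu$-dependence of the final constant is vacuous in this argument and is kept only for uniformity with the hypothesis.

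The main obstacle is exactly the orientation of the parallelotope: one must avoid $\u$ in favor of the supporting-hyperplane normal $\n$, which converts the scalar statement ``$\x + \delta\u$ is the farthest point of $D$ on the ray $\x + t\u$'' into the vectorial inequality $\n \cdot (\z - \x) \le \delta\,\n\cdot\u \le \delta$ valid for every $\z \in D$. The potentially small value of $\n \cdot \u$ only strengthens the bound (replacing $\delta$ by $h \le \delta$), while the case $\n \cdot \u = 0$ is excluded by $\x \in D\setminus\partial D$. Everything else is a routine arithmetic verification using $\delta \ge \sigma n^{-2}$ and $\delta \le 2R$.
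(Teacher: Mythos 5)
Your argument is correct, and it takes a genuinely different route from the paper's. The paper keeps the given coordinate frame (with $\u$ as the first coordinate direction) and builds a \emph{tilted} circumscribed parallelotope: its $z_1=\pm 1$ faces are parallel to the supporting hyperplane $\kappa$ at $\x+\delta\u$, while the $z_j=\pm1$ faces for $j\ge2$ remain the coordinate slabs $\{t_j=\pm R\}$. The determinant of the resulting affine map contains a factor $1/w_1$, where $w_1=\n\cdot\u$, and the hypothesis $\delta\le\nu\dist(\x,\partial D)$ is used precisely to derive $w_1\ge 1/(\nu+1)$, which controls $|\det T|\le c(R,d,\nu)$. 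You instead rotate the frame so that the first axis is $\n$ and take the axis-aligned bounding box; its volume is at most $(4R)^d$ regardless of $w_1$, while the supporting hyperplane still forces the short side $M_1\le h\le\delta$, producing the first one-dimensional factor of order $n^{-1}\sqrt{\delta}$ exactly as in the paper. Consequently, the only point where $\nu$ enters the paper's argument disappears in yours, and your observation that the $\nu$-hypothesis is redundant here is correct: your bound is $c(R,d,\sigma)\,n^{-d}\sqrt{\delta}$ for \emph{any} unit $\u$ with $\delta\ge\sigma n^{-2}$. This is a modest but genuine improvement; the paper presumably retains the hypothesis because it is automatically available in its applications (cf.\ Remark~\ref{rem-compare-requirements}) and because the tilted parallelotope is the template that is elaborated in the proofs of Theorems~\ref{main_lower_thm} and~\ref{mult}.
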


\begin{remark}\label{rem-compare-requirements}
Note that under the conditions of Theorem~\ref{main_lower_thm}, due to convexity of $D$ (similarly to~\eqref{li_delta}), we have $\frac{\delta r}{2R}B^2+\x\subset D$, hence $\delta\le \frac{2R}{r} \dist(\x,\partial D)$, implying~\eqref{delta-in-lemma}. The requirements in Theorem~\ref{mult} are clearly stronger than those in Lemma~\ref{nd_root_delta}. 
\end{remark}

\begin{remark}
 The inequality~\eqref{root_delta} can be considered a domain independent upper bound on $\lambda_n$ using only $\delta$. It will be sharp near points where the boundary is sufficiently smooth, see, e.g., Proposition~\ref{prop-example} or Proposition~\ref{prop-example-balpha}. A domain independent lower bound is given in~\cite[Theorem~4]{Kr}. That lower bound is sharp in the opposite situation, say, near ``corners'' of the domain or near vertices of polytopes.
\end{remark}

\begin{remark}
	Without modifications to the proofs, one can replace the condition~\eqref{delta-bound} or~\eqref{delta-in-lemma} with somewhat less demanding
	\[ 
	\delta\le\nu\dist(\x,\partial D\cap\{\y\in D:(\y-\x)\perp \u\}).
	\]
\end{remark}

\begin{remark}
	For anisotropic convex domains $D$, it may be beneficial to apply Theorem~\ref{main_lower_thm} or Theorem~\ref{mult} not for $D$, but for $T(D)$, where $T$ is an affine transform satisfying $B^d\subset T(D)\subset dB^d$. The existence of such $T$ is guaranteed by John's ellipsoid theorem~\cite{Jo}, more precisely, $T$ can be chosen so that $T^{-1}(B^d)$ is the ellipsoid of maximum volume in $D$. It is straightforward to track how Christoffel function changes after the affine transform of the domain, see~\eqref{affine}. Note, however, that~\eqref{main_lower_est} and~\eqref{mult_est} are not invariant under affine transforms.
\end{remark}

Theorem~\ref{mult} is sharp in the class of convex bodies if one only uses measurements $\delta$ and $ {\rm Vol}_{d-1}( \{\y\in D:(\y-\x)\perp \u\}$ as the next result shows. Note that~\eqref{delta-bound} implies that
\[
\tfrac{\delta}{\nu} B^d +\x \subset D, \quad\text{so}\quad
c(d,\nu)\delta^{d-1} \le {\rm Vol}_{d-1}( \{\y\in D:(\y-\x)\perp \u\} ).
\]

\begin{theorem}\label{mult_sharp}
For any $d\ge2$ there exist positive constants  $\beta_1$ and $\beta_2$ which depend only on $d$ such that for any positive $\delta$ and $v$ with $\beta_1 \delta^{d-1} < v < \beta_2$, one can find a convex body $D\subset\R^d$ and $\x\in D$ satisfying $B^d\subset D\subset 3B^d$, $\delta=\dist(\x,\partial D)$, $v={\rm Vol}_{d-1}\{\y\in D:(\y-\x)\perp \u\})$ where $\u$ is a unit vector such that $\x+\delta\u\in\partial D$ (so~\eqref{delta-bound} is satisfied even with $\nu=1$ and the hyperplane through $\x+\delta\u$ with normal vector $\u$ is supporting to $D$), and for any $n$ with $\delta>\sigma n^{-2}$, $\sigma>0$, the following inequality holds:
\begin{equation*}
\lambda_n(D,\x)\ge c(d,\sigma) n^{-d}\min\{\sqrt{\delta}, \delta^{1-d/2} v\}.
\end{equation*}
\end{theorem}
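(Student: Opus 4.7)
The plan is to construct $D$ by a two-case explicit construction and then prove the lower bound on $\lambda_n(D,\x)$ separately in each regime. When $v$ is larger than some constant multiple of $\delta^{(d-1)/2}$ (the ``smooth'' regime) I would take $D:=\conv(B^d\cup K)$, where $K=\{(1,\y'):|\y'|\le\rho\}$ is a flat $(d-1)$-disk of radius $\rho$ adjoined to the top of $B^d$, and set $\x:=(1-\delta)e_1$ and $\u:=e_1$; then $\x+\delta\u=e_1$ lies in $K\subset\partial D$, the hyperplane $\{y_1=1\}$ supports $D$ with outer normal $\u$, and $B^d\subset D\subset 3B^d$ and $\dist(\x,\partial D)=\delta$ are direct inspections. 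By rotational symmetry the cross-section of $D$ through $\x$ perpendicular to $\u$ is a Euclidean $(d-1)$-ball whose radius depends continuously and monotonically on $\rho$ (varying from $\sqrt{2\delta}$ up to an absolute constant), so the intermediate value theorem prescribes $\rho$ to realize any $v$ in this regime. When $v$ is smaller (the ``cone'' regime), I would use the pointed construction $D:=\conv(B^d\cup\{Me_1\})$ with $M\in(1,\sqrt{2}]$ and $\x$ chosen on the axis so that the closest boundary point to $\x$ lies on the lateral tangent cone; here $\u$ becomes the outward cone normal at $\x+\delta\u$, and varying $M$ covers the remaining range from $v\asymp\delta^{(d-1)/2}$ down to $v\asymp\delta^{d-1}$.

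For the lower bound, fix $P\in\P_n$ with $P(\x)=1$. In the smooth regime, since $B^d\subset D$ and $\x$ is interior to $B^d$ with $\dist(\x,\partial B^d)=\delta$, the classical ball asymptotic~\eqref{ball_as} gives
\[
\int_D P^2\ge\int_{B^d}P^2\ge\lambda_n(B^d,\x)\gtrsim n^{-d}\sqrt{\delta},
\]
and this matches $\min\{\sqrt{\delta},\delta^{1-d/2}v\}$ here because $v$ is large. In the cone regime, $D$ contains a truncated right circular cone $C$ with apex $\x+\delta\u$, half-angle $\theta_0=\arcsin(1/M)$, and axial length bounded below by an absolute constant; an affine diagonal scaling $\Phi$ maps $C$ onto the standard cone $C_0:=\{(r,\z)\in\R\times\R^{d-1}:r\in[0,1],|\z|\le r\}$ with $\Phi(\x)$ at axial depth $\asymp\delta$ from the apex. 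Tracking the extremal integral under $\Phi$ reduces the question to $\lambda_n(C_0,\Phi(\x))\gtrsim n^{-d}\delta^{d/2}$, which I would obtain by inscribing a $d$-simplex $S\subset C_0$ with one vertex at the apex and invoking the classical vertex-adjacent Christoffel asymptotic $\lambda_n(S,\,\text{axial point of depth }\delta)\sim n^{-d}\delta^{d/2}$. Unwinding $\Phi$ and using $\tan^{d-1}\theta_0\asymp v\cdot\delta^{-(d-1)}$ yields $\int_D P^2\gtrsim n^{-d}\delta^{1-d/2}v$, as required.

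The hardest part will be the cone-regime lower bound. A naive substitute for the inscribed cone $C$---a cylindrical parallelepiped with comparable cross-section and axial length---delivers only $n^{-d}\sqrt{\delta}v$, short by a factor $\delta^{(d-1)/2}$, because boxes do not reflect the cone's linear pinching toward the apex. The sharp rate requires the conic inscription, and the simplex-vertex Christoffel bound used above is itself a delicate result (derivable from one-dimensional Jacobi-weighted Christoffel asymptotics on $[0,1]$ with weight $r^{d-1}$, or from known asymptotics for the Christoffel function of a regular simplex near a vertex). Verifying this carefully is the technical crux of the proof.
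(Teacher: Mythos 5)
Your construction in the ``smooth'' regime and your lower bound via comparison with an inscribed ball are both sound in spirit, although you should cite the paper's behavior estimate~\eqref{ball} rather than the asymptotic~\eqref{ball_as}, which, as the paper warns in Section~\ref{sect_tools}, is not known to hold uniformly in $\x$ and hence does not directly yield a lower bound for finite $n$.

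The cone regime is where your argument has a genuine gap. You claim the pointed body $D=\conv(B^d\cup\{Me_1\})$ contains a truncated circular cone $C$ with apex at the singularity, half-angle $\theta_0=\arcsin(1/M)$, and ``axial length bounded below by an absolute constant.'' This last claim fails: the conical portion of $D$ extends from $y_1=1/M$ (the tangent circle) to $y_1=M$, so its axial length is $M-1/M\approx 2(M-1)$. To realize a given $v$ with the cross-section perpendicular to $\u$ at depth $\delta$, one finds $\cos\theta_0\approx\delta\, v^{-1/(d-1)}$, hence $M-1\approx\delta^2 v^{-2/(d-1)}$, which is of order $\delta$ (not of order $1$) as $v\uparrow\delta^{(d-1)/2}$. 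In that part of the regime the axial length of $C$ collapses to $O(\delta)$, so the normalization $\Phi$ places $\Phi(\x)$ at bounded depth in $C_0$, and the vertex asymptotic $\lambda_n(C_0,\cdot)\sim n^{-d}(\text{depth})^{d/2}$ no longer applies. Carrying through your own scaling in this borderline case gives a bound of order $n^{-d}\delta^{(d+1)/2}$, which is strictly weaker than the required $n^{-d}\delta^{1/2}$ for $d\ge 2$. Beyond this, the simplex-vertex Christoffel estimate you invoke, with constants uniform over arbitrarily thin cones, is itself a nontrivial ingredient that would need its own proof, so even if the geometry could be repaired, your route would remain markedly harder than the target statement.

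The paper sidesteps all of this by inscribing an \emph{ellipsoid} rather than a cone. Setting $\x=(2-\delta,0,\dots,0)$ and $T(x_1,\dots,x_d)=(x_1,\mu x_2,\dots,\mu x_d)$ with $\mu$ chosen via~\eqref{mu_sel} so that the slice of $T(2B^d)$ through $\x$ perpendicular to $e_1$ has $(d-1)$-volume exactly $v$, one takes $D=\conv\bigl(B^d\cup T(2B^d)\bigr)$. Then the exact affine identity~\eqref{affine} gives
\[
\lambda_n(D,\x)\ \ge\ \lambda_n\bigl(T(2B^d),\x\bigr)\ =\ \mu^{d-1}\,\lambda_n(2B^d,\x)\ \approx\ c(d,\sigma)\,\mu^{d-1}\,n^{-d}\sqrt{\delta}\ \approx\ c(d,\sigma)\,n^{-d}\,\delta^{1-d/2}\,v,
\]
with no loss of constants as $\mu\to0$, because~\eqref{affine} is an equality rather than an estimate. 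The only remaining work is elementary: one verifies that taking the convex hull with $B^d$ does not enlarge the relevant hyperplane slice (the identity~\eqref{eqn-b1-ok}), which is a short computation with tangent lines. The complementary case $\mu>1$ (your ``smooth'' regime) is handled by the plain comparison $\lambda_n(D,\x)\ge\lambda_n(2B^d,\x)\approx c(d,\sigma)n^{-d}\sqrt{\delta}$. You should replace the cone/simplex machinery with this ellipsoid inscription; it is both simpler and closes the gap at the boundary between the two regimes.
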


\begin{remark}\label{bulk}
Our estimates of behavior of Christoffel function are valid and stated for the uniform weight only. However, the implied bounds on asymptotics of Christoffel function can be combined with the universality in the bulk results of~\cite{Kr-Lu} to obtain upper bounds on asymptotics of Christoffel function for positive continuous weights on the same domain.
\end{remark}

The titles of the following sections are self-explanatory. A reader not interested in the proofs is encouraged to proceed directly to Section~\ref{sect_examples} for examples of applications of main results. 


\section{Tools and auxiliary results}\label{sect_tools}

We begin with two important ingredients used frequently in the proofs here.
For two domains satisfying $D_1\subset D_2\subset \R^d$, by~\eqref{def_lambda} one observes that
\begin{equation}
\label{compare}
\lambda_n(D_1,\x)\le \lambda_n(D_2,\x), \quad \x\in D_2.
\end{equation}
For an affine transform $T\x=\x_0+A\x$ of $\R^d$, where $\x_0\in\R^d$ and $A$ is an $n\times n$ matrix, we will write $\det T=\det A$. Unless specified otherwise, any affine transform below is assumed to be non-degenerate, i.e., $\det T\ne 0$. From~\eqref{def_lambda} it is straightforward to compute that
\begin{equation}
\label{affine}
\lambda_n(TD,T\x)=\lambda_n(D,\x)|\det T|, \quad \x\in D.
\end{equation}
Although both~\eqref{compare} and~\eqref{affine} are directly applicable only for the uniform weight, they may lead to asymptotic results for other weights (see Remark~\ref{bulk}).

The crucial tool for upper bounds is~\cite[Theorem~6.3]{Di-Pr} which we now restate as a lemma.
\begin{lemma}\label{lower_lemma}
Let $D\subset\R^d$ be a compact set, $\y=(y_1,\dots,y_d)\in[-1,1]^d$, $T$ be an affine transformation of $\R^d$ such that $D\subset T([-1,1]^d)$ and $T\y\in D$. Then
\[
\lambda_n(D,T\y)\le c |\det T| \rho_n(y_1)\rho_n(y_2)\dots \rho_n(y_d)
\]
where $c>0$ depends only on $d$, and $\rho_n(y)=n^{-2}+n^{-1}\sqrt{1-y^2}$.
\end{lemma}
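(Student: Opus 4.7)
The plan is to reduce to the standard cube via the two structural properties already recorded in~\eqref{compare} and~\eqref{affine}, and then to exploit the tensor product structure of $[-1,1]^d$ together with the classical one-dimensional estimate.

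First I would apply~\eqref{affine} to the affine map $T$, giving
\[
\lambda_n(D,T\y)=|\det T|\,\lambda_n(T^{-1}D,\y).
\]
Since $T^{-1}D\subset[-1,1]^d$ and $\y\in[-1,1]^d$, the monotonicity~\eqref{compare} yields
\[
\lambda_n(T^{-1}D,\y)\le\lambda_n([-1,1]^d,\y).
\]
So the whole lemma reduces to proving the cube estimate
\[
\lambda_n([-1,1]^d,\y)\le c(d)\,\rho_n(y_1)\cdots\rho_n(y_d), \qquad \y\in[-1,1]^d.
\]

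For the cube, I would build the extremal polynomial in~\eqref{def_lambda} by tensorization. Set $m:=\lfloor n/d\rfloor$. The classical one-dimensional Christoffel function estimate (Nevai; see also the upper half of~\eqref{eqn-varga}) provides, for each $i=1,\dots,d$, a univariate polynomial $p_i\in\P_{m,1}$ with $p_i(y_i)=1$ and
\[
\int_{-1}^1 p_i(t)^2\,dt \le c\,\rho_m(y_i).
\]
Define $P(\x):=\prod_{i=1}^d p_i(x_i)$. Then $P(\y)=1$, the total degree of $P$ is at most $dm\le n$, so $P\in\P_{n,d}$, and by Fubini
\[
\int_{[-1,1]^d} P(\x)^2\,d\x \;=\; \prod_{i=1}^d \int_{-1}^1 p_i(t)^2\,dt \;\le\; c^d\prod_{i=1}^d\rho_m(y_i).
\]
Since $\rho_m(y)\le d^2\rho_n(y)$ (as $m\ge n/d -1 \ge n/(2d)$ for $n\ge 2d$, and the small $n$ case is absorbed into the constant), the extremal definition~\eqref{def_lambda} of $\lambda_n$ yields $\lambda_n([-1,1]^d,\y)\le c(d)\prod_i\rho_n(y_i)$, which combined with the reduction above finishes the proof.

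The only nontrivial ingredient is the one-dimensional upper bound $\lambda_n([-1,1],y)\le c\rho_n(y)$, which I would simply quote from the classical literature rather than reprove; constructively it amounts to producing a polynomial peaked at $y$ of width $\rho_n(y)$, e.g.\ via a shifted/damped Fejér or de la Vallée Poussin-type kernel for the Chebyshev weight, followed by a routine comparison between the Chebyshev and Lebesgue measures. The main ``obstacle'' is purely bookkeeping: keeping the total degree of the tensor product polynomial at most $n$ (handled by working with factors of degree $\lfloor n/d\rfloor$) and tracking that the resulting $d$-dependent constants absorb both the factor $c^d$ and the loss from $\rho_m$ to $\rho_n$.
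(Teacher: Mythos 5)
The paper does not actually prove this lemma itself: it is explicitly a restatement of \cite[Theorem~6.3]{Di-Pr}, so there is no ``paper's own proof'' to compare against. Your argument, however, gives a correct and self-contained proof, and it is the natural one. The reduction via~\eqref{affine} and~\eqref{compare} to the cube estimate $\lambda_n([-1,1]^d,\y)\le c(d)\prod_i\rho_n(y_i)$, followed by a tensor product of univariate ``needle'' polynomials of degree $\lfloor n/d\rfloor$, is exactly the standard mechanism behind such results. The one-dimensional ingredient you quote is precisely what the paper records as the upper half of~\eqref{eqn-varga} with $w\equiv1$, so you are not even importing anything from outside the paper's own toolbox. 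Two small bookkeeping points you already flag, and which a finished write-up would want to spell out: when $n<d$ the degree budget $m=\lfloor n/d\rfloor$ is zero, in which case take $P\equiv1$ and use $\rho_n(y)\ge n^{-2}\ge (d)^{-2}$ to absorb the factor $2^d$ into $c(d)$; and the comparison $\rho_m(y)\le c(d)\rho_n(y)$ for $m=\lfloor n/d\rfloor\ge1$ follows from $m\ge n/(2d)$, as you indicate. With those details filled in, the proposal is a complete proof.
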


To establish sharpness of our main results, the lower bound in the following relation will be useful:
\begin{equation}
\label{ball}
\lambda_n(B^d,\x)\approx c(d,\sigma) n^{-d} \sqrt{1-|\x|^2}, \quad \x\in (1-\sigma n^{-2})B^d.
\end{equation}
Note that the asymptotics given in~\eqref{ball_as} does not imply~\eqref{ball} as that asymptotic relation is not known to be uniform. It is feasible that the methods of the proofs in~\cite{Bo-elal} or in~\cite{Xu} can be used to obtain~\eqref{ball}, however, such an approach would be very technical. Below we will provide rather elementary proof of the lower bound on $\lambda_n(B^d,\x)$ in~\eqref{ball}. This lower bound can also be derived from the positive cubature formula~\cite[Th.~6.3.3, p.~138]{Da-Xu} on the sphere and the connection to the ball~\cite[Ch.~11.1, p.~265]{Da-Xu}. The corresponding upper bound in~\eqref{ball} immediately follows from Lemma~\ref{lower_lemma} with $T$ chosen to be the identity and $\y$ located on one of the coordinate axes.

\begin{remark}
	If $\x$ is an interior point of a bounded $D\subset \R^d$ which is sufficiently far from the boundary, namely, if $rB^d+\x\subset D\subset RB^d+\x$, then~\eqref{compare}, \eqref{affine} and~\eqref{ball} provide $\lambda_n(D,\x)\approx c(d,r,R) n^{-d}$. Moreover, if $D\subset\R^d$ is a convex body containing a ball of radius $r$ and contained in a ball of radius $R$, then for $\delta=\dist(\x,\partial D)$ convexity implies $\frac{\delta r}{2R} B^d+\x\subset D$, so $\lambda_n(D,\x)\approx c(\delta,d,r,R)n^{-d}$.
\end{remark}

\begin{remark}
	Proof of~\cite[Theorem~1]{Kr} uses the asymptotics~\eqref{ball_as}. Instead, for the uniform weight, one can use~\eqref{ball} and obtain a version of~\cite[Theorem~1]{Kr} which bounds the behaviour of Christoffel function rather than its asymptotics.
\end{remark}

\begin{proof}[Proof of lower bound in~\eqref{ball}.]
Due to rotation invariance of $B^d$, let us assume $\x=(1-\delta,0,\dots,0)\in (1-\sigma n^{-2}) B^d$. If $\delta\ge1/2$, the required result follows from~\cite[Theorem~4.1, (4.4)]{Di-Pr} or from comparison with cube or simplex. Assuming $\delta<1/2$, consider any polynomial $P\in\P_{n,d}$ satisfying $P(\x)=1$ and $\|P\|^2_{L_2(B^d)} =  \lambda_n(B^d,\x)$. We need to show that $\|P\|_{L_2(B^d)}^2\ge c(d)\sqrt{\delta} n^{-d}$. Let $M:=\|P\|_{L_\infty((1-\delta/2)B^d)}\ge 1$ which is attained at a point $\y\in (1-\delta/2)B^d$. We obtain
\[
\lambda_n(B^d,\y) = \min_{Q\in \P_{n,d}, |Q(\y)|=1} \|Q\|_{L_2(B^d)}^2 \le \frac1{M^2}\|P\|^2_{L_2(B^d)} = \frac1{M^2} \lambda_n(B^d,\x),
\]
so by~\cite[Theorem~4.1, (4.3)]{Di-Pr} (or by forthcoming Lemma~\ref{almost-increase} yielding a somewhat larger constant), \eqref{compare} and~\eqref{affine}, we conclude that
\begin{align*}
M^2 & \le \frac{\lambda_n(B^d,\x)}{\lambda_n(B^d,\y)} \le
\frac{\lambda_n(B^d,(1-\delta,0,\dots,0))}{\lambda_n(B^d,(1-\delta/2,0,\dots,0))} \\
& \le \frac{\lambda_n(B^d,(1-\delta,0,\dots,0))}{\lambda_n(1/2B^d+(1/2,0,\dots,0),(1-\delta/2,0,\dots,0))} = 2^d.
\end{align*}
Let
\[
S:=\{(x_1,\dots,x_d):|(x_1,\dots,x_d)|=1,\ x_1\ge \cos(1/(2nM))\}
\]
be the spherical cap on the unit sphere centered at $(1,0,\dots,0)$ of angle $1/(2nM)$. We claim that
\begin{equation}\label{cap1}
P((1-\delta)\z)\ge \frac12, \quad \z\in S.
\end{equation}
Indeed, if $(1-\delta)\z\ne \x$, let $\omega$ be the two dimensional circle which is the intersection of the sphere of radius $1-\delta$ centered at the origin of $\R^d$ and the two dimensional plane through the origin and the points $\x$ and $(1-\delta)\z$. We can consider the restriction of $P$ to $\omega$ as a trigonometric polynomial of degree at most $n$. The derivative of $P$ along $\omega$ is at most $nM$ by Bernstein's inequality. Since the angle between the vectors $\x$ and $(1-\delta)\z$ is at most $1/(2nM)$ and $P(\x)=1$, the required inequality~\eqref{cap1} follows.

For arbitrary $\z\in S$ we now consider the segment $\{(1-\delta/2)t\z:t\in[-1,1]\}\subset (1-\delta/2)$. The univariate polynomial $p(t):=P((1-\delta/2)t\z)$ satisfies
\[
p\left(\frac{1-\delta}{1-\delta/2}\right)\ge \frac12 \quad\text{and}\quad \|p\|_{L_\infty[-1,1]}\le M.
\]
By Bernstein's inequality,
\[
|p'(t)|\le \frac{n}{\sqrt{1-t^2}}M \le \frac{2nM}{\sqrt{\delta}}\quad \text{if} \quad |t|\le \frac{1-\delta/2}{1-\delta},
\]
so for the interval $I$ of length $\frac{\sqrt{\delta}}{8nM}$ having the right endpoint $\frac{1-\delta/2}{1-\delta}$, we have $p(t)\ge\frac14$ when $t\in I$. So,
\[
P((1-\delta/2)t\z)\ge \frac14, \quad t\in I, \quad \z\in S,
\]
and recalling that $M^2\le 2^d$, it is not hard to see that the measure of $\{(1-\delta/2)t\z: t\in I, \z\in S\}$ is at least $c(d)\sqrt{\delta}n^{-d}$, which implies the required lower bound on $\|P\|_{L_2(B^d)}^2$.
\end{proof}

Now we will prove Proposition~\ref{boundary} and a lemma that can be of independent interest, as it shows that the Christoffel function is nearly decreasing on rays towards the boundary of the domain.
\begin{lemma}\label{almost-increase}
	Let $D$ be a star-like body in $\R^d$. If $\x\in D$ and $0<\mu<1$, then
	\[
	\lambda_n(D,\x)\le \mu^{-d} \lambda_n(D,\mu\x).
	\]
\end{lemma}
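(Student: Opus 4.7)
My plan is to exploit the extremal characterization \eqref{def_lambda} together with the dilation $\y\mapsto \mu\y$, which preserves polynomial degree and, by the star-like property, keeps us inside $D$.

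First I would fix a polynomial $g\in\P_n$ realizing the minimum in \eqref{def_lambda} for $\lambda_n(D,\mu\x)$, so that $g(\mu\x)=1$ and $\int_D g^2(\z)\,d\z = \lambda_n(D,\mu\x)$. I would then define $f(\y):=g(\mu\y)$. Since $\mu$ is a scalar, $f\in\P_n$, and clearly $f(\x)=g(\mu\x)=1$, so $f$ is admissible in the extremal problem defining $\lambda_n(D,\x)$.

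Next I would perform the linear change of variables $\z=\mu\y$ in the integral of $f^2$ over $D$. This gives $d\z=\mu^d\,d\y$, and as $\y$ ranges over $D$, $\z$ ranges over $\mu D$. Because $D$ is star-like with respect to the origin and $\mu\in(0,1)$, we have $\mu D\subset D$, so
\[
\int_D f^2(\y)\,d\y = \mu^{-d}\int_{\mu D}g^2(\z)\,d\z \le \mu^{-d}\int_D g^2(\z)\,d\z = \mu^{-d}\lambda_n(D,\mu\x).
\]
Combining this with $\lambda_n(D,\x)\le \int_D f^2(\y)\,d\y$, which holds because $f$ is admissible, yields the desired inequality.

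There is no real obstacle here — the argument is a one-line consequence of the scaling of Lebesgue measure under a dilation combined with the inclusion $\mu D\subset D$ coming from star-likeness. The only thing to be a little careful about is that $f(\y)=g(\mu\y)$ is genuinely a polynomial of total degree at most $n$ (which is immediate since composition with a scalar dilation does not raise degree), and that $\mu D$ is a measurable subset of $D$ so the change-of-variables step is valid.
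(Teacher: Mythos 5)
Your proof is correct and is essentially the paper's proof unpacked: the paper cites the monotonicity under inclusion \eqref{compare} and the affine change-of-variables formula \eqref{affine} (applied to $D\subset\mu^{-1}D$ and $T=\mu^{-1}\cdot\,$), and your argument simply reproves those two facts directly via the extremal characterization \eqref{def_lambda} and the substitution $\z=\mu\y$. Both are the same one-line idea; no gaps.
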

\begin{proof}
	By~\eqref{compare} and~\eqref{affine}, $\ds\lambda_n(D,\x)\le \lambda_n(\mu^{-1}D,\x)=\mu^{-d}\lambda_n(D,\mu \x)$.
\end{proof}

\begin{proof}[Proof of Proposition~\ref{boundary}.]
	The required upper bound of $\lambda_n(D,\x)$ is immediate by Lemma~\ref{almost-increase} (an absolute constant in the bound can be verified by computations), so it remains to prove the lower bound. Let $P\in\P_{n,d}$ be such that $P(\x)=1$ and $\lambda_n(D,\x)=\|P\|^2_{L_2(D)}$. Take $I:=\{t\x:t\in[1/2,1]\}$, and let $M:=\|P\|_{L_\infty(I)}\ge 1$ be attained at a point $\y\in I$. Since
	\[
	\lambda_n(D,\y) = \min_{Q\in \P_{n,d}, |Q(\y)|=1} \|Q\|_{L_2(D)}^2 \le \frac1{M^2}\|P\|^2_{L_2(D)} = \frac1{M^2} \lambda_n(D,\x),
	\]
	by Lemma~\ref{almost-increase} and choice of $I$, we obtain $M^2\le 2^d$. Applying Markov's inequality to $P$ on $I$, we get that $P(\mu\x)\ge 1/2$ for any $\mu\in[1-(8Mn^2)^{-1},1]\supset [1-c(d)n^{-2},1]$, so
	\[
	\lambda_n(D,\mu \x) = \min_{Q\in \P_{n,d}, |Q(\mu\x)|=1} \|Q\|_{L_2(D)}^2 \le \frac1{(P(\mu\x))^2}\|P\|^2_{L_2(D)} \le 4 \lambda_n(D,\x),
	\]
	and the required lower bound on $\lambda_n(D,\x)$ is established.
\end{proof}

\section{Applications and examples}\label{sect_examples}

First we illustrate what is the behavior of Christoffel function when the boundary is sufficiently smooth.
\begin{proposition}\label{prop-example}
	Suppose $D\subset \R^d$ is a convex body for which $\partial D$ is a $(d-1)$-dimensional $C^2$ submanifold in $\R^d$ (in the sense of differential geometry). For any interior point $\x\in D$, let $\delta:=\dist(\x,\partial D)$.  If $\sigma n^{-2}<\delta<1$, $\sigma>0$, then
	\[
	\lambda_n(D,\x)\approx c(D,\sigma)n^{-d}\sqrt{\delta}.
	\]
\end{proposition}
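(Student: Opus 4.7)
The plan is to prove the upper and lower bounds separately. For the upper bound, I would apply Lemma~\ref{nd_root_delta} with $\u$ chosen as the unit vector from $\x$ toward a nearest boundary point $\y$. By the standard convexity argument---the hyperplane through $\y$ orthogonal to $\u$ is a supporting hyperplane of $D$---one has $\max\{t:\x+t\u\in D\}=\delta=\dist(\x,\partial D)$, so hypothesis~\eqref{delta-in-lemma} holds with $\nu=1$. Since $D$ is contained in some ball of radius $R(D)$, Lemma~\ref{nd_root_delta} immediately yields $\lambda_n(D,\x)\le c(D,\sigma)n^{-d}\sqrt{\delta}$. Note that only convexity is used for this direction; the $C^2$ hypothesis enters only in the lower bound.

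For the lower bound, the key geometric input I would exploit is a uniform inner ball condition: there exists $\rho=\rho(D)\in(0,1)$ such that for every $\y\in\partial D$ some inscribed ball $\rho B^d+\z_\y\subset D$ is tangent to $\partial D$ at $\y$. This is a standard consequence of the $C^2$ hypothesis on the compact boundary (equivalent to $\partial D$ having positive reach in the sense of Federer) and follows from uniform boundedness of the principal curvatures combined with convexity of $D$.

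With $\rho$ in hand, I would split into two cases. If $\delta\ge\rho$, then $\rho B^d+\x\subset D$, and~\eqref{compare}, \eqref{affine}, and the lower bound in~\eqref{ball} applied at the origin give $\lambda_n(D,\x)\ge c(d)\rho^d n^{-d}\ge c(D)n^{-d}\sqrt{\delta}$, using $\sqrt{\delta}\le 1$. If $\delta<\rho$, let $\y$ be the nearest boundary point to $\x$ and let $\rho B^d+\z_\y\subset D$ be the corresponding inscribed ball; then $\x$ lies on the segment from $\z_\y$ to $\y$ at distance $\rho-\delta$ from $\z_\y$, i.e., at distance $\delta$ from $\partial(\rho B^d+\z_\y)$. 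Combining~\eqref{compare}, \eqref{affine}, and the lower bound in~\eqref{ball}---whose hypothesis on the evaluation point, after rescaling by $\rho$, reduces to $\delta\ge(\sigma/\rho)\cdot\rho n^{-2}=\sigma n^{-2}$, which holds by assumption---gives $\lambda_n(D,\x)\ge\rho^d\, c(d,\sigma/\rho)\, n^{-d}\sqrt{2\delta/\rho-\delta^2/\rho^2}\ge c(D,\sigma)n^{-d}\sqrt{\delta}$. The main obstacle is solely the justification of the uniform inscribed radius $\rho$; once it is granted, everything else is a direct combination of Lemma~\ref{nd_root_delta} with~\eqref{compare}, \eqref{affine}, and~\eqref{ball}.
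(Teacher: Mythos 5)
Your proposal is correct and follows essentially the same route as the paper: the upper bound is obtained from Lemma~\ref{nd_root_delta} with $\u$ pointing toward the nearest boundary point (so $\nu=1$), and the lower bound comes from~\eqref{compare}, \eqref{affine}, and~\eqref{ball} applied to an inscribed ball of uniform radius $\rho(D)$ tangent at the nearest boundary point, whose existence is exactly the consequence of $C^2$ smoothness (``rolling ball'' condition) that the paper invokes. The only difference is that you make the verification of the hypotheses and the case split $\delta\gtrless\rho$ explicit, details the paper leaves implicit.
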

\begin{proof}
	The upper estimate of $\lambda_n(D,\x)$ is guaranteed by Lemma~\ref{nd_root_delta} (even without $C^2$ assumption). The lower estimate follows from~\eqref{compare} and~\eqref{ball} by considering an inscribed ball of radius $c(D)$ tangent to $\partial D$ at the point $(\x+\delta B^d) \cap \partial D$, which exists due to $C^2$ smoothness. 
\end{proof}
Crucial for the lower bound in Proposition~\ref{prop-example} is the property that a ball of a fixed radius ``rolls freely'' inside $D$. One can refer to~\cite{Wa} for further discussion of this property and alternative equivalent conditions on $\partial D$.

The second example was, in fact, the main motivation for this work. It is concerned with estimating the behaviour of Christoffel function for the unit balls in $l_\alpha$ metric, $1<\alpha<2$, which serve as examples of bodies that are ``between'' the smooth $C^2$ case of a ball ($\alpha=2$) and the non-smooth case of a polytope ($\alpha=1$). Namely, we denote (for Euclidean balls we have $B^d=B^d_2$)
\[
B^d_\alpha:=\{(x_1,\dots,x_d):|x_1|^\alpha+\dots+|x_d|^\alpha\le 1\}, \quad 1\le\alpha<\infty.
\]
As a simple application of Theorem~\ref{mult}, we show that the extra logarithmic factor in~\cite[Theorem~2]{Kr} can be removed.
\begin{corollary}
Suppose $1\le\alpha\le 2$, and let $\gamma(\alpha,d)=\frac12+\frac{(d-1)(2-\alpha)}{2\alpha}$. If $\sigma n^{-2}<\delta<1$, $\sigma>0$, then
\[
\lambda_n(B^d_\alpha,(1-\delta,0,\dots,0))\le c(d,\alpha,\sigma)n^{-d}\delta^{\gamma(\alpha,d)}.
\]
\end{corollary}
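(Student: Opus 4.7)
The plan is a direct application of Theorem~\ref{mult} with $\x=(1-\delta,0,\dots,0)$ and $\u=(1,0,\dots,0)$, so that $\x+\delta\u=(1,0,\dots,0)\in\partial B^d_\alpha$. First I would verify the hypotheses. Since $1\le\alpha\le 2$, one has $d^{1/2-1/\alpha}B^d\subset B^d_\alpha\subset B^d$, giving $r,R$ depending only on $d$ and $\alpha$. The convexity argument illustrated by~\eqref{li_delta} (applied with the inscribed ball centered at the origin so that $t_0=-(1-\delta)$ and $\delta-t_0=1$) then yields $\dist(\x,\partial B^d_\alpha)\ge r\delta$, so~\eqref{delta-bound} holds with $\nu=d^{1/\alpha-1/2}=c(d,\alpha)$. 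The hyperplane $\{y_1=1\}$ is supporting to $B^d_\alpha$ because $|y_1|\le\|\y\|_\alpha\le 1$ on $B^d_\alpha$. Hence all hypotheses of Theorem~\ref{mult} are satisfied.

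Next I would compute the cross-section. The set $\{\y\in B^d_\alpha:(\x-\y)\perp\u\}$ is the slice $\{y_1=1-\delta,\ \sum_{i=2}^d|y_i|^\alpha\le 1-(1-\delta)^\alpha\}$, which is a translate of $\rho\, B^{d-1}_\alpha$ with $\rho=(1-(1-\delta)^\alpha)^{1/\alpha}$. Using the elementary inequality $1-(1-\delta)^\alpha\le \alpha\delta$ (valid for $\alpha\ge 1$ and $0\le\delta\le 1$), one gets
\[
\Vol_{d-1}(\{\y\in B^d_\alpha:(\x-\y)\perp\u\})\le c(d,\alpha)\,\delta^{(d-1)/\alpha}.
\]
Plugging this into~\eqref{mult_est} and simplifying the exponent, $1-d/2+(d-1)/\alpha$ is easily seen to equal $\gamma(\alpha,d)$, giving the bound $\delta^{1-d/2}\Vol_{d-1}(\cdots)\le c(d,\alpha)\,\delta^{\gamma(\alpha,d)}$.

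Finally, since $(d-1)(2-\alpha)/(2\alpha)\ge 0$ for $1\le\alpha\le 2$, we have $\gamma(\alpha,d)\ge 1/2$, so $\delta^{\gamma(\alpha,d)}\le \sqrt\delta$ and the minimum in~\eqref{mult_est} is achieved by the cross-section term. Combining the pieces yields the desired inequality. There is no real obstacle; the only mildly delicate point is tracking the dependence of $r$, $R$, $\nu$ on $d$ and $\alpha$ so that the constant in~\eqref{mult_est} becomes $c(d,\alpha,\sigma)$, but this is routine once the inclusions $r B^d\subset B^d_\alpha\subset R B^d$ are fixed.
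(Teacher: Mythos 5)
Your proposal is correct and follows the same route as the paper: apply Theorem~\ref{mult} with $\u=(1,0,\dots,0)$, identify the hyperplane section as a dilate of $B^{d-1}_\alpha$ of radius $(1-(1-\delta)^\alpha)^{1/\alpha}\approx c(\alpha)\delta^{1/\alpha}$, and simplify the exponent to $\gamma(\alpha,d)$. You are in fact slightly more careful than the paper in verifying~\eqref{delta-bound}: you derive $\dist(\x,\partial B^d_\alpha)\ge r\delta$ from the convexity argument and so obtain a valid $\nu=c(d,\alpha)$, whereas the paper asserts $\delta=\dist(\x,\partial B^d_\alpha)$, which for $\alpha<2$ is not literally true (e.g.\ for $\alpha=1$, $d=2$ that distance is $\delta/\sqrt{2}$), though the conclusion that~\eqref{delta-bound} holds is unaffected.
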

\begin{proof}
We will apply Theorem~\ref{mult}. Since $\frac{1}{\sqrt{d}}B^d\subset B^d_1\subset B^d_\alpha\subset B^d$, we can take $r=\frac{1}{\sqrt{d}}$ and $R=1$.  With $\tilde\delta:=(1-(1-\delta)^\alpha)^{1/\alpha}\approx c(\alpha) \delta^{1/\alpha}$, for $\x=(1-\delta,0,\dots,0)$ we have $\delta=\dist(\x,\partial B^d_\alpha)$ and $\u=(1,0,\dots,0)$ clearly satisfies~\eqref{delta-bound} and the other required condition. The hyperplane section $\{\y\in D:(\y-\x)\perp \x \}$ is exactly $\tilde\delta B^{d-1}_\alpha$, which has $(d-1)$-volume $c(d,\alpha)\delta^{(d-1)/\alpha}$. Therefore, by~\eqref{mult_est},
\[
\lambda_n(D,\x)\le c(d,\alpha,\sigma) n^{-d} \delta^{1-d/2} \delta^{(d-1)/\alpha}=c(d,\alpha,\sigma) \delta^{\gamma(d,\alpha)}.
\]
\end{proof}
Matching asymptotic lower bound for the so called $C^\alpha$ domains (which include $B^d_\alpha$) was established in~\cite[Theorem~1]{Kr}. The lower bound of~\cite[Theorem~1]{Kr} provides a ``worst-case'' exponent $\gamma(d,\alpha)$ for $C^\alpha$ domains, which is sharp on the segment joining the origin and the ``most singular'' points such as $(1,0,\dots,0)$ for $B^d_\alpha$. The actual behavior of Christoffel function in other locations of the domain can be very different. In particular, near the points where the boundary is smooth, we can get the exponent corresponding to $\alpha=2$. Let us illustrate this pointwise phenomenon for the diagonal direction of $B^2_\alpha$, $1\le \alpha\le 2$.
\begin{proposition}\label{prop-example-balpha}
For any $1\le \alpha\le 2$, let $\x=(2^{-1/\alpha},2^{-1/\alpha})$ (which belongs to $\partial B^2_\alpha$). If  $\sigma n^{-2}<\delta<1/2$, $\sigma>0$, then
\[
\lambda_n(B^2_\alpha,(1-\delta)\x)\approx c(\alpha,\sigma) n^{-2} \sqrt{\delta}.
\]
\end{proposition}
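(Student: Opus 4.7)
The plan is to derive both the upper and the lower bound from one geometric observation: the disc $|\x|B^2$ is inscribed in $B^2_\alpha$ and tangent to $\partial B^2_\alpha$ precisely at $\x$. This inclusion follows directly from the two-dimensional power-mean inequality: for $1\le\alpha\le 2$ one has $|y_1|^\alpha+|y_2|^\alpha\le 2^{1-\alpha/2}(y_1^2+y_2^2)^{\alpha/2}$, with equality iff $|y_1|=|y_2|$; specializing to $y_1^2+y_2^2=|\x|^2=2^{1-2/\alpha}$ makes the right-hand side equal $1$. An immediate consequence is that $\x/|\x|$ is the outward unit normal to $\partial B^2_\alpha$ at $\x$, so $\x$ is the nearest boundary point to $(1-\delta)\x$ and $\dist((1-\delta)\x,\partial B^2_\alpha)=\delta|\x|$.

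For the upper bound I will feed Lemma~\ref{nd_root_delta} with $D=B^2_\alpha\subset B^2$, the point $(1-\delta)\x$, and direction $\u=\x/|\x|$. The forward reach $\max\{t:(1-\delta)\x+t\u\in B^2_\alpha\}$ equals $\delta|\x|$ and coincides with $\dist((1-\delta)\x,\partial B^2_\alpha)$, so $\nu=1$ and $R=1$ work; the estimate $\delta|\x|\ge\sigma\,2^{-1/2}n^{-2}$ provides the required $\sigma$-control. The lemma then yields $\lambda_n(B^2_\alpha,(1-\delta)\x)\le c(\alpha,\sigma) n^{-2}\sqrt{\delta|\x|}\le c(\alpha,\sigma) n^{-2}\sqrt{\delta}$.

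For the lower bound, the inscribedness combined with~\eqref{compare} and~\eqref{affine} applied to the dilation $\z\mapsto|\x|\z$ (of determinant $|\x|^2$) gives
\[
\lambda_n(B^2_\alpha,(1-\delta)\x)\ge \lambda_n(|\x|B^2,(1-\delta)\x)=|\x|^2\lambda_n\bigl(B^2,(1-\delta)\x/|\x|\bigr).
\]
The point $(1-\delta)\x/|\x|$ has Euclidean norm $1-\delta\le 1-\sigma n^{-2}$, so the lower half of~\eqref{ball} with $d=2$ applies and produces a factor of order $\sqrt{1-(1-\delta)^2}$, which is comparable to $\sqrt{\delta}$ for $0<\delta<1/2$. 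Since $|\x|^2=2^{1-2/\alpha}\ge 1/2$ for $\alpha\ge 1$, this yields the matching lower bound $\lambda_n(B^2_\alpha,(1-\delta)\x)\ge c(\alpha,\sigma) n^{-2}\sqrt{\delta}$.

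The only step with any genuine content is the inscribedness $|\x|B^2\subset B^2_\alpha$, which I expect to be the main (and essentially sole) obstacle; once it is in hand, the remainder is a bookkeeping exercise combining Lemma~\ref{nd_root_delta}, the containment and affinity identities~\eqref{compare}--\eqref{affine}, and the lower half of~\eqref{ball}.
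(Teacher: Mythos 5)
Your proposal is correct and matches the paper's argument: the upper bound via Lemma~\ref{nd_root_delta} is exactly the paper's suggestion, and your lower bound via the inscribed disc $|\x|B^2$ (centered at the origin, hence on $x_1=x_2$, of radius $|\x|=2^{1/2-1/\alpha}=c(\alpha)$, tangent to $\partial B^2_\alpha$ at $\x$) is precisely the disc the paper alludes to. You additionally supply the power-mean verification of $|\x|B^2\subset B^2_\alpha$ that the paper omits as a ``technical detail.''
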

\begin{proof}
The upper estimate of $\lambda_n(B^2_\alpha,(1-\delta)\x)$ is immediate by Theorem~\ref{main_lower_thm} (or simply by Lemma~\ref{nd_root_delta}). For the lower estimate, inscribe a disc of radius $c(\alpha)$ into $B^2_\alpha$ tangent to the boundary at $\x$ with the center on the line $x_1=x_2$, and use~\eqref{compare} and~\eqref{ball}. We hope the reader will forgive us the omission of technical details here.
\end{proof}
It would be interesting to compute the actual pointwise behavior of $\lambda_n(B^2_\alpha,\x)$ for arbitrary $\x$. 
\begin{conjecture}
For any $1<\alpha<2$, any $\x\in B^2_\alpha$, let $\delta:=\dist(\x,\partial B^2_\alpha)$ and $l_i:=\max\{t:\x+(-1)^it\v\in B^2_\alpha\}$, $i=1,2$, where $\u$ is such that $\x+\delta\u\in\partial D$ and $\v$ is one of the two unit vectors orthogonal to $\u$. If $\delta>\sigma n^{-2}$, $\sigma>0$, then
\begin{equation}\label{eqn-conj}
\lambda_n(B^2_\alpha,\x) \approx c(\alpha,\sigma) n^{-2} \sqrt{l_1l_2}.
\end{equation}
\end{conjecture}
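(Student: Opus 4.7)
The upper bound is immediate from Theorem~\ref{main_lower_thm} applied with $R=1$ and $r=1/\sqrt{2}$, since $\frac{1}{\sqrt{2}}B^2\subset B^2_\alpha\subset B^2$: the boundary $\partial B^2_\alpha$ is locally at least parabolic (and even $|\xi|^\alpha$-cusped at axis points) for $1<\alpha<2$, which forces $l_1 l_2\le c(\alpha)\delta$; hence $\min\{l_1 l_2,\delta\}=l_1 l_2$, and~\eqref{main_lower_est} gives the required upper estimate.

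For the lower bound the plan is to inscribe a reference convex body $D'\subset B^2_\alpha$ containing $\x$ and to invoke~\eqref{compare}. Taking $D'=T(B^2)$ to be an ellipse and using~\eqref{affine} together with the lower bound in~\eqref{ball} would yield
\[
\lambda_n(B^2_\alpha,\x)\ge \lambda_n(D',\x)\ge c\,|\det T|\,n^{-2}\sqrt{1-|T^{-1}\x|^2}.
\]
In tangent-normal coordinates at $\y=\x+\delta\u\in\partial B^2_\alpha$, with $D'$ of semi-axes $(a,b)$ along $(\u,\v)$, the right-hand side is $\approx c\,b\sqrt{a\delta}\,n^{-2}$; one then wants to choose $(a,b)$ so that the chord widths of $D'$ from $\x$ in the $\v$ direction approximate $l_1$ and $l_2$, which produces the target factor $\sqrt{l_1 l_2}$ provided $a$ can be kept of order one.

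Two extreme cases are already tractable by this strategy. Near ``diagonal'' boundary points where $\partial B^2_\alpha$ is $C^2$ with bounded curvature, an inscribed disc of fixed radius (exactly as in Proposition~\ref{prop-example-balpha}) suffices, and there $l_1 l_2\approx\delta$. Near the four axis points of $B^2_\alpha$, where the curvature is unbounded, one can either appeal to~\cite[Theorem~1]{Kr} or inscribe an affinely scaled copy $T(B^2_\alpha)\subset B^2_\alpha$ tangent at the axis point and apply~\eqref{affine}; either way one gets $\lambda_n\ge c\,n^{-2}\delta^{1/\alpha}\approx c\,n^{-2}\sqrt{l_1 l_2}$.

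The intermediate regime, where $l_1\ne l_2$ and the local profile of $\partial B^2_\alpha$ interpolates between parabolic and $|\xi|^\alpha$-cusped, is the main obstacle. The inscription constraint $a(1-\sqrt{1-\xi^2/b^2})\le h(\xi)$ (where $h$ is the local boundary profile) forces the normal semi-axis $a$ of any ellipse tangent to $\partial B^2_\alpha$ at $\y$ to degenerate as $\y$ approaches an axis point, so a na\"ive implementation loses a polynomial factor in $\delta$. Resolving this appears to need a more flexible reference set --- for example an ellipse tangent to $\partial B^2_\alpha$ at two points displaced from $\y$ along $\v$, or a non-elliptical inscribed region capturing both the anisotropy $l_1\ne l_2$ and the cusp exponent $\alpha$ --- or else a direct construction of orthonormal polynomials on $B^2_\alpha$ sidestepping the affine-to-ball reduction. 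Handling this interpolation uniformly in $\x$ with the sharp exponent is, we believe, why the statement is recorded only as a conjecture.
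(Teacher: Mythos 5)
This statement is a conjecture; the paper gives no full proof of it. The paper's own commentary is exactly that the upper bound in~\eqref{eqn-conj} follows from Theorem~\ref{main_lower_thm} once one checks $\min\{l_1l_2,\delta\}\approx c(\alpha)l_1l_2$ via circumscribed discs, while the lower bound is left open, together with the belief that $l_1\approx c(\alpha)l_2$. Your attempt reflects this state of affairs faithfully: the upper-bound argument you give is the same one (your ``at-least-parabolic'' observation is the circumscribed-disc bound $l_i\le c(\alpha)\sqrt{\delta}$, from which $l_1l_2\le c(\alpha)\delta$), and you correctly identify the lower bound as the unresolved part and describe plausibly why a na\"ive inscribed-ellipse approach does not close it uniformly.

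Two small cautions on your upper-bound paragraph. First, the parameter $r$ in Theorem~\ref{main_lower_thm} is not the inradius of $D$: it must come from a disc centred on the ray $\{\x+t\u:t<0\}$ (see also Remark~1.3), and for $B^2_\alpha$ the inward normal ray from $\y=\x+\delta\u$ does not in general pass through the origin, so you cannot simply substitute $r=1/\sqrt2$. One still gets $r\ge c(\alpha)>0$ uniformly over $\x$, but this requires a short separate argument; since the target constant is $c(\alpha,\sigma)$ anyway, the conclusion is unaffected. Second, $l_1l_2\le c(\alpha)\delta$ yields $\min\{l_1l_2,\delta\}\approx c(\alpha)l_1l_2$, not exact equality. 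Your discussion of the two tractable extremes --- diagonal points via Proposition~\ref{prop-example-balpha}, axis points via the $C^\alpha$ lower bound of~\cite[Theorem~1]{Kr} --- and of the obstruction in the intermediate regime is consistent with what the paper records; like the paper, you leave the lower bound open, which is the correct assessment.
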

The upper bound of $\lambda_n(B^2_\alpha,\x)$ in~\eqref{eqn-conj} is valid by Theorem~\ref{main_lower_thm} (one can show that $\min\{l_1l_2,\delta\}\approx c(\alpha)l_1l_2$ using circumscribed discs), so it only remains to establish the lower bound. We also believe that  $l_1\approx c(\alpha)l_2$ in these settings.

There are some domains which do not properly fall into the proposed $C^\alpha$ classification of~\cite{Kr}. One such example is half-ball in $\R^3$
\[
B^3_+:=\{(x_1,x_2,x_3)\in B^3:x_3\ge0\}
\]
for which the behaviour of the Christoffel function on the ``rim'' $\{(x_1,x_2,0):x_1^2+x_2^2=1\}$ was found in~\cite[Section~9]{Di-Pr}. Below we make a ``diagonal step'' inside the domain from the rim and compute the order of the Christoffel function. 
\begin{theorem}\label{thm_example_half_ball}
If $\sigma n^{-2}<\mu<1/3$, $\sigma>0$, then
\[
\lambda_n(B^3_+,(1-\mu,0,\mu/4))\approx c(\sigma) n^{-3}\mu.
\]
\end{theorem}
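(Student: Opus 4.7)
My plan is to prove the two bounds separately. For the upper bound, I would apply Theorem~\ref{mult} to $D = B^3_+$ with the unit vector $\u := (4,0,-1)/\sqrt{17}$, which points from $\x$ to the rim point $(1,0,0)$. One checks that $\delta = \max\{t : \x + t\u \in B^3_+\} = \sqrt{17}\mu/4$ while $\dist(\x,\partial B^3_+) = \mu/4$ is attained at the floor, so~\eqref{delta-bound} holds with $\nu = \sqrt{17}$; the inclusions $y_1 \le 1$ and $y_3 \ge 0$ for $\y \in B^3_+$ combine to give $y_3 \ge 4(y_1-1)$, so the hyperplane through $(1,0,0)$ with normal $\u$ supports $B^3_+$. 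Computing the cross-section $S := \{\y \in B^3_+ : (\y-\x)\cdot\u = 0\}$: substituting $y_3 = 4y_1 - 4 + 17\mu/4$ and $s = 1 - y_1$ turns $y_3 \ge 0$ into $s \le 17\mu/16$ and $|\y|^2 \le 1$ into $17 s^2 - (2+34\mu)s + y_2^2 + 289\mu^2/16 \le 0$, whose feasible set has area (with a Jacobian $\sqrt{17}$ from the tilted plane) $\Vol_2(S) \approx c\mu^{3/2}$. Since $\sqrt{\delta} \approx \mu^{1/2}$ and $\delta^{-1/2}\Vol_2(S) \approx \mu$, the estimate~\eqref{mult_est} gives $\lambda_n(B^3_+, \x) \le c(\sigma) n^{-3}\mu$.

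For the matching lower bound, my strategy is to inscribe an affine image $E = T(B^3) \subset B^3_+$ containing $\x$ and then apply~\eqref{compare}, \eqref{affine} together with the lower bound in~\eqref{ball}:
\[
\lambda_n(B^3_+, \x) \ge |\det T|\,\lambda_n(B^3, T^{-1}\x) \ge c(\sigma) n^{-3}\,|\det T|\sqrt{1 - |T^{-1}\x|^2}.
\]
It will suffice to exhibit such $T$ with $|\det T|\sqrt{1-|T^{-1}\x|^2}$ of order $\mu$. My candidate $E$ is aligned with the local geometry near the rim: one principal axis along $(0,1,0)$ (tangent to the rim, length $\approx \sqrt\mu$), one along $\u$ (pointing into the corner, length $\approx \mu$), and one along $\v := (1,0,4)/\sqrt{17}$ (tangent to the sphere in the $y_1 y_3$-plane), with the center of $E$ displaced from $\x$ in the $-\u$ direction so that $\x$ lies near $\partial E$, contributing a factor $\sqrt{1-|T^{-1}\x|^2} \approx \sqrt\mu$. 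Verifying $E \subset B^3_+$ reduces to the quadratic sphere inequality $|\y|^2 \le 1$ and the linear floor inequality $y_3 \ge 0$, both of which translate into explicit inequalities on the axis lengths and the displacement.

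The main obstacle will be verifying that this tilted ellipsoid actually achieves the sharp product $|\det T|\sqrt{1-|T^{-1}\x|^2} \approx \mu$; coordinate-aligned ellipsoids centered at $\x$ yield only $\mu^{5/2} n^{-3}$, and off-center coordinate-aligned ellipsoids saturate at $\mu^{3/2} n^{-3}$, because the sphere and floor constraints simultaneously cap the extents in the $\u$- and $(0,0,1)$-directions. Should the tilted-ellipsoid route prove too delicate, a fallback plan is a Bernstein--Markov argument modeled on the proof of the lower bound in~\eqref{ball}: take the extremal polynomial $P \in \P_{n,3}$ with $P(\x) = 1$ and $\|P\|^2_{L_2(B^3_+)} = \lambda_n(B^3_+, \x)$, use Lemma~\ref{almost-increase} to bound $\|P\|_{L_\infty}$ on a small neighborhood of $\x$ by an absolute constant, then apply 1D Bernstein inequalities along three transverse directions ($\u$, $(0,1,0)$, and $\v$) to force $P \ge 1/2$ on a measurable set of 3D volume at least $c\mu/n^3$, whose $L^2$-integral produces the required lower bound.
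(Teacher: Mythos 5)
Your upper bound is correct and is exactly the paper's argument: same $\u = (4,0,-1)/\sqrt{17}$, same $\delta = \sqrt{17}\mu/4$, same $\nu = \sqrt{17}$, same cross-section estimate $\Vol_2(S)\approx c\mu^{3/2}$, leading to $c(\sigma)n^{-3}\mu$ via Theorem~\ref{mult}.

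The lower bound has a genuine gap. The general plan --- inscribe $T(B^3)\subset B^3_+$ and apply \eqref{compare}, \eqref{affine}, \eqref{ball} --- is exactly the paper's, which cites the transform from \cite[Lemma~9.6]{Di-Pr}. But your proposed ellipsoid geometry is wrong and cannot produce the factor $\mu$. You want semi-axes $\approx\sqrt\mu$ along $(0,1,0)$, $\approx\mu$ along $\u$, and bounded along $\v$, with $\sqrt{1-|T^{-1}\x|^2}\approx\sqrt\mu$; that yields $|\det T|\sqrt{1-|T^{-1}\x|^2}\approx \mu\cdot\sqrt\mu\cdot c\cdot\sqrt\mu = c\,\mu^2$, i.e.\ $\lambda_n\gtrsim n^{-3}\mu^2$, not the required $n^{-3}\mu$. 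You have the thin directions backwards. The inscribed ellipsoid that actually works (the one from \cite{Di-Pr}) is \emph{large} in two directions and thin in only one: its semi-axes are of constant order roughly along $\u$ and along $(0,1,0)$ (about $1/2$ and $1/8$), and of order $\sqrt\mu$ only along $\v$; its center is far from $\x$ (near $(\tfrac{1-\mu}{2},0,\tfrac{1+\mu}{8})$), and $T^{-1}\x = (1-\mu,0,0)$ so $\sqrt{1-|T^{-1}\x|^2}\approx\sqrt\mu$. Then $|\det T|\approx\sqrt\mu$ and the product is $\approx\mu$. Intuitively: near the rim of the half-ball, one can still fit a body of fixed diameter in the tangential direction $(0,1,0)$ \emph{and} in the direction $\u$ pointing diagonally toward the rim (since there is room on the far side of the domain); only the direction $\v$ transverse to both the sphere and the floor is genuinely pinched to $\sqrt\mu$. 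Your fallback Bernstein--Markov plan is plausible in outline but is not carried out, and the claim that it delivers volume $c\mu/n^3$ requires choosing the three transversal directions with the same care --- the same mistaken geometry would again lose a power of $\mu$.
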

\begin{proof}
We begin with the upper bound for which Theorem~\ref{mult} will be used. Clearly, we can take $r=1/2$ and $R=1$. With $\x=(1-\mu,0,\mu/4)$, we can choose $\u=\frac4{\sqrt{17}}(1,0,-\frac14)$, then $\delta=\frac{\sqrt{17}}{4} \mu$ and the two dimensional plane through $\x+\delta\u=(1,0,0)$ with normal vector $\u$ is supporting to $B^3_+$. It is easy to see that
\[
\dist(\x,\partial B^3_+)=\min\{\mu/4,1-|\x| \}=\mu/4,
\]
so~\eqref{delta-bound} holds. All the conditions of Theorem~\ref{mult} are fulfilled. With $V(D,\x):=\{\y\in D:(\y-\x)\perp \x \}$, the required bound follows from~\eqref{mult_est} if we show that ${\rm Vol}_{2}(V(D,\x))\le c \mu^{3/2}$. If $(x,y)$ are Cartesian coordinates in the two dimensional plane containing $V(D,\x)$ chosen so that $\x$ has the coordinates $(x,y)=(0,0)$, $x$-axis is parallel to $x_2$ axis and $y$-axis points in the positive direction of $x_3$ axis (upwards), then it is not hard to see that in this new coordinate system
\[
V(D,\x)\subset \left[-\sqrt{\tfrac{17}{8}\mu},\sqrt{\tfrac{17}{8}\mu}\right]\times [-\tfrac{\sqrt{17}}{16}\mu,\sqrt{17}\mu],
\]
which immediately implies ${\rm Vol}_{2}(V(D,\x))\le c \mu^{3/2}$, and we are done with the upper estimate.

For the lower bound, we note that it was established in the proof of~\cite[Lemma~9.6]{Di-Pr} that the affine transform
\[
T(x_1,x_2,x_3)=\left(1-\frac{1+\mu-x_1}2,\frac{x_2}8,\frac{\sqrt{3\mu}}{10}x_3+\frac{1+\mu-x_1}8\right)
\]
satisfies $TB^3\subset B^3_+$ and $|\det T|\ge\frac{\sqrt{3\mu}}{160}$ (when $0<\mu<1/3$). Therefore, by~\eqref{compare}, \eqref{affine} and~\eqref{ball},
\begin{align*}
\lambda_n(B^3_+,(1-\mu,0,\mu/4)) &= \lambda_n(B^3_+,T(1-\mu,0,0))\ge \lambda_n(T(B^3),T(1-\mu,0,0)) \\
&= |\det T| \lambda_n(B^3,(1-\mu,0,0)) \ge c(\sigma) \sqrt{\mu}\cdot n^{-3}\sqrt{\mu} = c(\sigma)n^{-3}{\mu}.
\end{align*}
\end{proof}

\begin{remark}
	Theorem~\ref{thm_example_half_ball} can be generalized to higher dimensions using the same technique.
\end{remark}


\section{Proofs of the main results}\label{sect_proofs}

\begin{proof}[Proof of Lemma~\ref{nd_root_delta}.]
	We can assume $D\subset RB^d$, $\x=(a,0,\dots,0)$, $\u=(1,0,\dots,0)$. Let $\w=(w_1,\dots,w_d)$, $w_1\ge 0$, be the unit normal vector of a supporting hyperplane $\kappa$ to $D$ at $(a+\delta,0,\dots,0)$. Using $(t_1,\dots,t_d)$ as coordinates, the equation of $\kappa$ is
	\[
	(t_1-a-\delta)w_1+t_2w_2+\dots+t_dw_d=0.
	\]
	The condition $\dist(\x,\partial D)\ge \frac{\delta}{\nu}$ means that $\frac{\delta}{\nu} B^d+\x\subset D$, so from the above equation, for any $(t_2,\dots,t_d)\in \frac{\delta}{\nu} B^{d-1}$, we must have $\delta w_1\ge t_2w_2+\dots+t_d w_d$. Choose $t_j=\frac{\delta}{\nu}w_j$ for $j\ge2$, then $\delta w_1\ge \frac{\delta}{\nu}(1-w_1^2)\ge \frac{\delta}{\nu}(1-w_1)$, so $w_1\ge\frac1{\nu+1}$. The affine transform $T(z_1,\dots,z_d)=(t_1,\dots,t_d)$ defined by
	\begin{align*}
	t_1 &= \frac{(a+\delta+\frac{Rd}{w_1})z_1+a+\delta-\frac{Rd}{w_1}}2-\frac{R}{w_1}(w_2z_2+\dots+w_dz_d), \\
	t_j &= Rz_j, \quad j\ge2,
	\end{align*}
	maps the hyperplanes $z_j=\pm 1$ to the hyperplanes $t_j=\pm R$, $j\ge2$, the hyperplane $z_1=1$ to $\kappa$ and the hyperplane $z_1=-1$ to the hyperplane
	\[
	\left(t_1+\frac{Rd}{w_1}\right)w_1+t_2w_2+\dots+t_dw_d=0,
	\]
	which does not intersect $[-R,R]^d$ (as $|w_j|\le1$). Therefore,
	\[
	D \subset (RB^d)\cap\{(t_1,\dots,t_d):(t_1-a-\delta)w_1+t_2w_2+\dots+t_dw_d\le0\} \subset T([-1,1]^d).
	\]
	As $|\det T|=R^{d-1}(a+\delta+\frac{Rd}{w_1})/2<c(R,d,\nu)$, with $(y_1,\dots,y_d)=T^{-1}(a,0,\dots,0)$ one computes that $1-y_1=\frac{2w_1\delta}{Rd}<\frac{2}{Rd}\delta$ and $y_j=0$, $j\ge2$, so by Lemma~\ref{lower_lemma},
	\[
	\lambda_n(D,\x)\le c |\det T| \rho_n(y_1)  \dots \rho_n(y_d) \le c(R,d,\nu) n^{1-d} \rho_n(1-\tfrac{2}{Rd}\delta) \le c(R,d,\nu,\sigma) n^{-d} \sqrt{\delta},
	\]
	where in the last step $\delta>\sigma n^{-2}$ was used.
\end{proof}

\begin{proof}[Proof of Theorem~\ref{main_lower_thm}.]
In view of Lemma~\ref{nd_root_delta} and Remark~\ref{rem-compare-requirements}, we only need to establish that
\begin{equation}\label{root_l1l2}
\lambda_n(D,\x)\le c(r,R,\sigma) n^{-2} \sqrt{l_1l_2}.
\end{equation}
Using~\eqref{affine} and applying a translation and a rotation, if necessary, we can assume that $\x=(0,a)$, $\u=(0,1)$, $\v=(1,0)$, $rB^2\subset D\subset 2RB^2$, so $(0,a+\delta)\in\partial D$, $((-1)^il_i,a)\in\partial D$, $i=1,2$. Further, we can assume that $l_i<r$ for both $i=1,2$, as otherwise by~\eqref{li_delta} we have $l_1l_2\ge c(r,R) \delta$ and~\eqref{root_l1l2} follows from~\eqref{root_delta}. Let $q_i$, $i=1,2$, be the line passing through $((-1)^il_i,a)$ and $(0,a+\delta)$. Denote by $r_i$, $i=1,2$, the line through $((-1)^i r,0)$ and $((-1)^il_i,a)$. For a line $q$ not containing the origin, we denote by $q^+$ the half-plane bounded by $q$ and containing the origin. Since $(\pm r,0)\in D$ and $D$ is convex, we have
\[
D\subset q_1^+ \cup (r_1^+\cap q_2^+) \quad\text{and}\quad D\subset q_2^+ \cup (r_2^+\cap q_1^+).
\]
Therefore, if $P_i$ is the point of intersection of $r_i$ and $q_{3-i}$ (note that due to $l_i<r$ this point $P_i$ is located in the $(3-i)$-th quadrant), while $s_i$ is the line through $P_i$ parallel to $q_i$, $i=1,2$, we obtain $D\subset s_1^+\cap s_2^+$. Now we begin construction of an appropriate affine map $T:(z_1,z_2)\mapsto(x,y)$ with intention to apply Lemma~\ref{lower_lemma}. We require that $T$ is such that the lines $s_1$ and $s_2$ are the images of the lines $z_1=1$ and $z_2=1$ under $T$, respectively. Next we choose $t_i$, $i=1,2$, as the line parallel to $s_i$ such that $t_i$ is supporting to $D$ and the origin is between $t_i$ and $s_i$. Now $T$ will be uniquely defined, if, in addition to the above condition regarding $s_i$, we demand that $t_i$ is the image of the line $z_i=-1$ under $T$, $i=1,2$.

Let $\phi$ be the angle between the lines $q_1$ and $q_2$ which does not contain the $y$-axis. Using~\eqref{li_delta}, we have
\[
\phi=\arctan \frac\delta{l_1}+\arctan\frac{\delta}{l_2}\le 2\arctan c(r,R) \le \pi-c(r,R)
\]
and as $\arctan x > \frac x{2H}$ for $0<x<H$,
\[
\phi=\arctan \frac\delta{l_1}+\arctan\frac{\delta}{l_2}>c(r,R)\left(\frac\delta{l_1}+\frac\delta{l_2}\right),
\]
therefore,
\begin{equation}\label{angle}
\sin \phi \ge c(r,R)\left(\frac{\delta}{l_1}+\frac{\delta}{l_2}\right).
\end{equation}

As $l_2<r$, the line $r_2$ has negative slope, and the $x$-coordinate of $P_2$ is less than $l_2$. Hence, as the slope of $q_1$ is $\frac\delta{l_1}$, we use~\eqref{li_delta} to estimate that
\[
|P_2-(0,a+\delta)|<l_2\frac{\sqrt{\delta^2+l_1^2}}{l_1}<c(r,R) l_2.
\]
So the distance from $(0,a+\delta)$ to the line $s_2$ is less than $c(r,R) l_2 \sin\phi$. The distance from $(0,a)$ to $s_2$ is less than
$\delta+c(r,R) l_2\sin\phi <c(r,R) l_2\sin\phi$, where~\eqref{angle} was used. By similar arguments, the distance from $(0,a)$ to $s_1$ is at most $c(r,R) l_1 \sin\phi$.

Now it is clear that the distance between the parallel lines $s_i$ and $t_i$ is at least $2r$ (as $rB^2\subset D\subset T([-1,1]^2)$) and at most $c(R)$ (as $(0,a)\in D$ and $l_i<2R$), $i=1,2$. Hence,
\[
|\det T|=\frac{\rm{Vol}_2(T([-1,1]^2))}{\rm{Vol}_2([-1,1]^2)}\le\frac{c(R)}{\sin \phi}.
\]
Defining $(y_1,y_2)=T^{-1}(0,a)$, the bounds for the distances from $(0,a)$ to $s_1$ and to $s_2$ imply
\[
1-y_i\le c(r,R)l_i \sin\phi, \quad i=1,2.
\]
Since $\delta>\sigma n^{-2}$,
\[
\rho_n(y_i)\le \rho_n\left(1-c(r,R)l_i \sin\phi\right) \le c(r,R,\sigma) n^{-1} \sqrt{l_i \sin\phi}.
\]
We are ready to apply Lemma~\ref{lower_lemma} and obtain~\eqref{root_l1l2}. Indeed, using the above inequalities,
\[
\lambda_n(D,\x)\le c |\det T| \rho_n(y_1) \rho_n(y_2) \le c(r,R,\sigma) \frac{1}{\sin\phi} n^{-2} \sqrt{l_1 \sin\phi} \sqrt{l_2 \sin\phi} = c(r,R,\sigma) n^{-2}\sqrt{l_1 l_2},
\]
which completes the proof of the theorem.
\end{proof}

\begin{proof}[Proof of Theorem~\ref{mult}.]
Due to Lemma~\ref{nd_root_delta} and Remark~\ref{rem-compare-requirements}, we only need to show that
\[
\lambda_n(D,\x) \le c(d,r,R,\nu,\sigma) n^{-d} \delta^{1-d/2} {\rm Vol}_{d-1}(V(D,\x)),
\]
where $V(D,\x):=\{\y\in D: (\x-\y)\perp \u\}$.
We assume that $D\subset RB^d$, $\x=(a,0,\dots,0)$, $\u=(1,0,\dots,0)$ and $\y:=(a+\delta,0,\dots,0)\in \partial D$ while the hyperplane $\{(x_1,\dots,x_d):x_1=a+\delta\}$ is supporting to $D$ at $\y$.  In these settings, $V(D,\x)=\{(x_1,\dots,x_d)\in D:x_1=a\}$.

Before proceeding, we need some preliminaries. A $(d-1)$-simplex is the closed convex hull of $d$ points (vertices) not all lying in a $(d-2)$-dimensional plane. It has $d$ facets ($(d-2)$-dimensional faces), each facet is a $(d-2)$-simplex. The centroid of a $(d-1)$-simplex $S$ with vertices $\z_1,\dots,\z_d$ is $\z:=\frac1d(\z_1+\dots+\z_d)$. Since $\z_j=\frac1{d-1}\sum_{i\ne j}(-(d-1)(\z_i-\z)+\z)$, the image of $S$ under the homothety with coefficient $-(d-1)$ with respect to $\z$ is a simplex $S'=-(d-1)(S-\z)+\z$ containing $S$ such that every vertex $\z_j$ of $S$ belongs to (and is the centroid of) the facet of $S'$ which is parallel to the facet of $S$ not containing $\z_j$. Another property we need is that for any point $\z'\in S$ the homothety of $S$ with coefficient $2$ with respect to $\z'$ is contained in the homothety of $S$ with coefficient $d+1$ with respect to $\z$, i.e., that
\begin{equation}\label{sim}
2(S-\z')+\z'\subset (d+1)(S-\z)+\z.
\end{equation}
Indeed, for any $\w\in S$ we can write $\w=\sum_i \alpha_i \z_i$ while $\z'=\sum_i \beta_i \z_i$, where $\alpha_i$, $\beta_i$, $i=1,\dots,d$, are non-negative and $\sum_i\alpha_i=\sum_i\beta_i=1$. Then
\[
\frac{2\w-\z'+d\z}{d+1}=\sum_{i=1}^d\frac{2\alpha_i-\beta_i+1}{d+1}\z_i
\]
is a convex combination of $\z_i$, thus, it belongs to $S$ and~\eqref{sim} is proved.

Let $S$ be a $(d-1)$-simplex of largest $(d-1)$ volume contained in $V(D,\x)$. Let $\z$ be the centroid of $S$. Due to maximality of the $(d-1)$ volume, the homothety of $S$ with coefficient $-(d-1)$ with respect to $\z$, i.e., the simplex $S_1=-(d-1)(S-\z)+\z$ contains $V(D,\x)$. Indeed, assuming to the contrary that a point $\z'$ of $V(D,\x)$ is outside of $S_1$, we can find a $(d-2)$-dimensional plane $\tau$ containing a facet of $S_1$ such that $S$ and $\z'$ are separated by $\tau$. By what was established above, $\tau$ contains one vertex of $S$ and the remaining $d-1$ vertices of $S$ belong to a $(d-2)$-dimensional plane $\tau'$ parallel to $\tau$. Due to separation, the distance from the point $S\cap\tau$ to $\tau'$ is less than the distance from $\z'$ to $\tau'$.  Therefore, the simplex with vertices $z'$ and $d-1$ vertices of $S$ belonging to $\tau'$ will have a larger volume than $S$ and will be contained in $V(D,\x)$, contradiction.

The inclusions $S\subset V(D,\x)\subset S_1$ imply ${\rm Vol}_{d-1}(V(D,\x))\approx c(d) {\rm Vol}_{d-1}(S)$. Consider the ``corner''
$
K_1=\{(1-t)\y+t\v: t\ge0,\ \v\in S_1\}
$
with ``vertex'' $\y$ consisting of all rays originating at $\y$ and passing through points of $S_1$. Alternatively, $K_1$ is the intersection of $d$ half-spaces containing the origin and determined by the hyperplanes which pass through $\y$ and a facet of $S_1$. We claim that
\begin{equation}\label{dom1}
D\cap\{(x_1,\dots,x_d):x_1\le a\} \subset K_1.
\end{equation}
Indeed, if $\w=(w_1,\dots,w_d)\in D$, $w_1\le a$, then $\y+t(\w-\y)\in V(D,\x)\subset S_1$, where $t=\frac{\delta}{a+\delta-w_1}$, so $\w\in K_1$.

Let $P(x_1,\dots,x_d):=(x_2,\dots,x_d)$ be the orthogonal projection along the first coordinate. Set $S_2:=(d+1)(S_1-\z)+\z$. Now we claim that
\begin{equation}\label{dom2}
P(\{(x_1,\dots,x_d)\in D: a\le x_1\le a+\delta\}) \subset P(S_2).
\end{equation}
We argue similarly to the proof of the previous inclusion, but now use $(a-\delta,0,\dots,0)\in D$ and convexity. So if $\w=(w_1,\dots,w_d)\in D$, $a\le w_1\le a+\delta$, then
\[
(1-t)(a-\delta,0,\dots,0)+t\w \in V(D,\x)\subset S_1,
\]
with $t=\frac{\delta}{\delta+w_1-a}\in[\frac12,1]$. Therefore, $P(\w)\in P(2S_1)\subset P(S_2)$, where we used $P(\x)=\zero\in P(S_1)$ and~\eqref{sim}.

Next we define the corner
\[
K_2=\{(1-t)(a+2\delta,0,\dots,0)+t(\v+(\delta,0,\dots,0)): t\ge0,\ \v\in S_2\}.
\]
We claim that $K_1\subset K_2$. Indeed, consider any point $(1-t)(a+\delta,0,\dots,0)+t\v$ of $K_1$, where $t\ge 0$ and $\v\in S_1$. Then $\frac{t}{t+1}P(\v)\in P(S_1)\subset P(S_2)$ due to $\zero\in P(S_1)$, so
\[
(1-t)(a+\delta,0,\dots,0)+t\v=(1-(t+1))(a+2\delta,0,\dots,0)+(1+t)(\tfrac{t}{t+1}\v+(\delta,0,\dots,0))\in K_2.
\]
We also have $[a,a+\delta]\times P(S_2)\subset K_2$, which is clearly seen from the definition of $K_2$ with $t\in [1,2]$. In summary, due to~\eqref{dom1}, \eqref{dom2} and the fact that $D\subset\{(x_1,\dots,x_d):x_1\le a+\delta\}$, we conclude that $D\subset K_2$.

Now we define an affine mapping $T$ of $\R^d$ so that the hyperplanes $z_i=1$, $i=1,\dots,d$, are mapped into the hyperplanes defining $K_2$, while the hyperplanes $z_i=-1$ are mapped into the hyperplanes supporting to $D$ so that $D\subset T([-1,1]^d)$ (this is possible since we established that $D\subset K_2$ while $D$ is bounded). Note that the distance from $\x=(a,0,\dots,0)$ to any of the hyperplanes $T(\{z_i=1\})$ is at most the distance from $\x$ to $(a+2\delta,0,\dots,0)=T(1,\dots,1)\in T(\{z_i=1\})$, which is $2\delta$. Recalling that $D$ contains a ball of radius $r$, we see that the distance between the hyperplanes $T(\{z_i=1\})$ and $T(\{z_i=-1\})$ is at least $2r$. Therefore, if $(y_1,\dots,y_d)=T^{-1}(\x)$, we have $1-y_i\le \frac{\delta}{r}$, $i=1,\dots,d$. Also, since $D$ is contained in a ball of radius $R$, we obtain that the distance between the hyperplanes $T(\{z_i=1\})$ and $T(\{z_i=-1\})$ is at most $2R+2\delta\le c(R)$.

Now our goal is to estimate $|\det T|$. Denote $S_3:=T([-1,1]^d)\cap\{x_1=a\}=K_2\cap\{x_1=a\}$ and $A:={\rm Vol}_{d-1}(S_3)=2^{d-1} {\rm Vol}_{d-1}(S_2) \approx c(d) {\rm Vol}_{d-1}(V(D,\x))$. We also need $V_i:= {\rm Vol}_{d-1}(T([-1,1]^d\cap\{z_i=1\}))$ the $(d-1)$-volumes of the facets of $T([-1,1]^d)$ and $V:= {\rm Vol}_{d}(T[-1,1]^d)$. Now the fact that the distance between the hyperplanes $T(\{z_i=1\})$ and $T(\{z_i=-1\})$ is at most $c(R)$ can be written as $\frac{V}{V_i}\le c(R)$. Let $\wt\x:=(a+2\delta,0,\dots,0)=T(1,\dots,1)$, and let $\x_i$ be the vertex of $S_3$ which does not belong to $T(\{z_i=1\})$, while $\wt \x_i$ be the vertex of $T([-1,1]^d)$ on the line joining $\wt\x$ and $\x_i$ different from $\wt\x$, $i=1,\dots,d$. Set $\alpha_i:=\frac{|\wt\x-\wt\x_i|}{|\wt\x-\x_i|}$. Using $\conv(\cdot)$ to denote the closed convex hull, we have
\begin{align*}
V &=d! \, {\rm Vol}_{d}(\conv(\{\wt \x,\wt\x_1,\dots,\wt\x_d\}))= d!\, {\rm Vol}_{d}(\conv(\{\wt \x,\x_1,\dots,\x_d\}))  \prod_{i=1}^d \alpha_i \\
&=2 (d-1)!\, \delta A \prod_{i=1}^d \alpha_i,
\end{align*}
and similarly
\begin{align*}
V_j &= (d-1)!\, {\rm Vol}_{d-1}(\conv(\{\wt \x,\wt\x_1,\dots,\wt\x_d\}\setminus\{\wt\x_j\})) \\
&= (d-1)!\, {\rm Vol}_{d-1}(\conv(\{\wt \x,\x_1,\dots,\x_d\}\setminus\{\x_j\})) \prod_{i\ne j} \alpha_i,
\end{align*}
so
\begin{equation}\label{vvj}
c(R)\ge \frac{V}{V_j}=\frac{2\delta A\alpha_j}{{\rm Vol}_{d-1}(\conv(\{\wt \x,\x_1,\dots,\x_d\}\setminus\{\x_j\}))}.
\end{equation}
Note that the orthogonal projection of $\conv(\{\wt \x,\x_1,\dots,\x_d\}\setminus\{\x_j\})$ onto $\{x_1=a\}$ is $\conv(\{\x,\x_1,\dots,\x_d\}\setminus\{\x_j\})$. Recall that $\frac{\delta}{\nu}B^d+\x\subset D$, hence, $\frac{\delta}{\nu}B^{d-1}\subset S_3$, so the distance from $\x$ to the $(d-2)$ dimensional plane containing $\conv(\{\x_1,\dots,\x_d\}\setminus\{\x_j\})$ is at least $\frac\delta\nu$. The distance between $\x$ and $\wt \x$ is $2\delta$, so we have that the angle between the hyperplane containing $\{\wt \x,\x_1,\dots,\x_d\}\setminus\{\x_j\}$ and the hyperplane containing $\{\x,\x_1,\dots,\x_d\}\setminus\{\x_j\}$ is at most $\arctan\frac\nu2$, so the cosine of this angle is at least $\cos(\arctan\frac\nu2)=:\tilde c>0$. Thus, continuing~\eqref{vvj}, we have
\begin{align*}
c(R)\delta A \alpha_j &\le {\rm Vol}_{d-1}(\conv(\{\wt \x,\x_1,\dots,\x_d\}\setminus\{\x_j\})) \\ & \le \frac{{\rm Vol}_{d-1}(\conv(\{\x,\x_1,\dots,\x_d\}\setminus\{\x_j\}))}{\tilde c}
\le \frac{A}{\tilde c}.
\end{align*}
Finally,
\[
|\det T|=\frac{V}{2^d}=c(d)\delta A \prod_{i=1}^d \alpha_i \le c(R,d,\nu)\delta A \prod_{i=1}^d \frac1{\delta} \le c(R,d,\nu) \delta^{1-d} {\rm Vol}_{d-1}(V(D,\x)),
\]
so
\begin{align*}
\lambda_n(D,\x) &\le c|\det T| \prod_{j=1}^{d}\rho_n(y_j) \le c(d,r,R,\nu,\sigma) {\rm Vol}_{d-1}(V(D,\x)) \delta^{1-d} \left(n^{-1}\sqrt{\delta}\right)^d \\
 &=c(d,r,R,\nu,\sigma)n^{-d}\delta^{1-d/2}{\rm Vol}_{d-1}(V(D,\x)),
\end{align*}
which concludes the proof.
\end{proof}

\section{Sharpness}\label{sect_proofs_sharp}

\begin{proof}[Proof of Theorem~\ref{upper_thm}.]
	
	First we treat the case $l_1l_2\le\delta$. For this case, it is sufficient to assume that $10\delta<l_1,l_2<\frac54$, which clearly implies $\delta<1$.
	
	Let $x$ and $y$ be Cartesian coordinates in $\R^2$. Set $Q=(2-\delta,0)$. Assuming that $l_2\ge l_1$, for $0\le\alpha\le 1$, let $Q_1$ and $Q_2$ be the two points of intersection of the line $x=-\alpha y+2-\delta$ with the circle $x^2+y^2=4$. Suppose that $-m_1<0<m_2$ are the $y$ coordinates of $Q_1$ and $Q_2$. It is straightforward that
	\[
	m_{1,2}=\frac{\sqrt{4\alpha^2+4\delta-\delta^2} \mp \alpha(2-\delta)}{1+\alpha^2},
	\]
	$m_1m_2=\delta(4-\delta)$, and using $0<\delta<1$ and $0\le \alpha\le 1$, we estimate that
	\[
	\alpha+\sqrt{\delta} \le \sqrt{4\alpha^2+4\delta-\delta^2} + \alpha(2-\delta) \le 4(\alpha+\sqrt{\delta}),
	\]
	which leads to
	\[
	\frac34\le \frac{m_1}{\frac{\delta}{\alpha+\sqrt{\delta}}}\le 4 \quad\text{and}\quad
	\frac12\le \frac{m_2}{\alpha+\sqrt{\delta}}\le 4.
	\]
	We want to choose $\alpha$ so that $l_2/l_1=m_2/m_1$. Note that if $\alpha=0$ then $m_2/m_1=1$, and that the quotient $m_2/m_1$ depends on $\alpha$ continuously. We have $l_2/l_1\ge1$ and in the other direction $l_2/l_1 < (5/4)/(10\delta)=1/(8\delta)$. If $\alpha=1$, then $m_2/m_1\ge1/(8\delta)$ which is bigger than $l_2/l_1$. Therefore, by continuity, the required choice of $\alpha\in[0,1]$ is possible.
	
	Consider the affine transform
	\[
	T(x,y)=(x+\alpha y,\mu y),
	\quad\text{where}\quad
	\mu=\frac{l_1}{m_1}=\frac{l_2}{m_2}=\sqrt{\frac{l_1l_2}{m_1m_2}}=\sqrt{\frac{l_1l_2}{\delta(4-\delta)}}.
	\]
	Such $T$ leaves any point on the $x$ axis unchanged (in particular, $T(Q)=Q$ and $T(2,0)=(2,0)$), satisfies $|T(Q_i)-Q|=l_i$, $i=1,2$, and that the line joining $T(Q_1)$ and $T(Q_2)$ is vertical. In other words, $\delta(T(2B^2),\x)=\delta$ and $l_i(T(2B^2),\x)=l_i$, $i=1,2$. Further, by~\eqref{ball}, $\lambda_n(2B^2,Q)\approx c(\sigma) n^{-2}\sqrt{\delta}$, hence, as $|\det T|=\mu\approx \sqrt{\frac{l_1l_2}{\delta}}$, due to~\eqref{affine}, we have $\lambda_n(T(2B^2),Q)\approx c(\sigma)  n^{-2}\sqrt{l_1 l_2}$. As $\mu\le1/\sqrt{3}$ (recall that we assume $l_1l_2\le \delta$) and $0\le\alpha\le 1$, one can see that $T(2B^2)\subset 4B^2$. So, we could take $T(2B^2)$ as the required $D$, but it may not be true that $B^2\subset T(2B^2)$. To achieve that, we let $D$ be the closed convex hull of $T(B^2)$ and $B^2$ (Christoffel function will not decrease due to~\eqref{compare}). We would like to verify that our measurements $\delta$, $l_1$ and $l_2$ do not change, i.e., that all three points $T(2,0)$, $T(Q_1)$, $T(Q_2)$ (which clearly belong to the boundary of $T(2B^2)$) are on the boundary of $D$. Note that the half-planes defined by the supporting lines to $2B^2$ at $(2,0)$ and $Q_i$, $i=1,2$, and containing $2B^2$ are given by the inequalities
	\begin{equation}\label{half-planes}
	x\le 2 \quad\text{and}\quad
	x\sqrt{4-m_i^2}\mp ym_i \le 4, \quad i=1,2,
	\end{equation}
	respectively. Therefore, it is enough to show that if $T(x,y)\in B^2$, then $x$ and $y$ satisfy~\eqref{half-planes}. If $T(x,y)\in B^2$, then $(x+\alpha y)^2+\mu^2y^2\le 1$, in particular, $x\le|\alpha y|+1$ and $|y|\le\frac1{\mu}$. We have (recall that $l_i>\delta/10$, $i=1,2$)
	\begin{equation}\label{aux1}
	|ym_i|\le \frac{m_i}{\mu} = \frac{m_1m_2}{l_{3-i}} \le \frac{4\delta}{l_{3-i}} \le \frac{2}{5}, \quad i=1,2,
	\end{equation}
	and
	\begin{equation}\label{aux2}
	|\alpha y|\le \frac{\alpha}{\mu}<\frac{\alpha+\sqrt{\delta}}{\mu}\le \frac{2m_2}{\mu} \le \frac{4}{5}.
	\end{equation}
	Now~\eqref{half-planes} readily follow from $x\le |\alpha y|+1$, \eqref{aux1} and~\eqref{aux2}. We verified that $D$ satisfies all the required conditions (also note that $\lambda_n(D,\x)\ge\lambda_n(T(2B^2),\x)$), which completes the case $l_1l_2\le\delta$.

	For the case $l_1l_2>\delta$, we first construct the required body $\overline D$ as in the case $l_1l_2\le\delta$ but for parameters $(\bar \delta, \bar l_1, \bar l_2)=(\delta, l_1, \delta/l_1)$. It is straightforward that $10\delta<l_1,l_2<\frac1{10}$ implies $10\bar\delta<\bar l_1,\bar l_2<\frac54$. Note that the line joining $T(Q_1)$ and $T(Q_2)$ is parallel to the supporting line to $\overline D$ at $T(1,0)$ (both are vertical). Choosing point $Q_3$ on the ray from $T(Q)$ to $T(Q_2)$ on the distance $l_2>\delta/l_1$ from $T(Q)$ and taking $D$ as the closed convex hull of $Q_3$ and $\overline D$ completes the proof.
\end{proof}

\begin{proof}[Proof of Theorem~\ref{mult_sharp}.]
We will choose large enough $\beta_1>0$ and small enough $\beta_2>0$ satisfying $(\beta_2/\beta_1)^{\frac1{d-1}}<1/2$ (which guarantees $\delta<1/2$) and certain additional conditions later in the proof.

Take $\x=(2-\delta,0,\dots,0)$, where $0<\delta<1/2$. Note that $2B^d$ restricted to the hyperplane $x_1=2-\delta$ is $\sqrt{\delta(4-\delta)}B^{d-1}$, so if we let
\begin{equation}\label{mu_sel}
\mu=\frac{v^{\frac1{d-1}}}{\sqrt{\delta(4-\delta)}({\rm Vol}_{d-1}(B^{d-1}))^{\frac1{d-1}}},
\quad\text{then}\quad
{\rm Vol}_{d-1}\left(\mu \sqrt{\delta(4-\delta)}B_{d-1},\x)\right)=v.
\end{equation}

First we consider the case $\mu\le 1$. Then $v^{\frac1{d-1}}\le c(d) \sqrt{\delta}$, and $\min\{\delta^{1-d/2}v,\sqrt{\delta}\} \approx c(d) \delta^{1-d/2}v$. Define
\[
T(x_1,\dots,x_d):=(x_1,\mu x_2, \dots, \mu x_d),
\]
then ${\rm Vol}_{d-1}(V(T(2B^d),\x))=v$. Also, as $x_1$ axis is unchanged under $T$, and we can take $\u=(1,0,\dots,0)$ to have that $\x+\delta\u\in\partial T(2B^d)$ and the hyperplane through $\x+\delta\u$ with normal vector $\u$ is supporting to $T(2B^d)$. By~\eqref{ball}, $\lambda_n(2B^d,\x)\approx c(d,\sigma) n^{-d} \sqrt{\delta}$, so, as $|\det T|=\mu^{d-1}\approx c(d) v \delta^{-(d-1)/2}$, due to~\eqref{affine}, we have $\lambda_n(T(2B^d),\x)\approx c(d,\sigma) n^{-d} \delta^{1-d/2}v$. Since $\mu\le 1$, we have $T(2B^d)\subset 2B^d$. We define the desired $D$ as the convex hull of the unit ball $B^d$ and $T(2B^d)$, then $B^d\subset D\subset 3B^d$, the properties of $\delta$ and $\u$ do not change, and by~\eqref{compare} we have $\lambda_n(D,\x)\ge \lambda_n(T(2B^d),\x)$. It remains to show that 
\begin{equation}\label{eqn-b1-ok}
\{\y\in T(2B^d):(\y-\x)\perp \u\}=\{\y\in D:(\y-\x)\perp \u\}.
\end{equation}
 It is enough to restrict our attention to two dimensions since $D$ and $T(2B^d)$ are invariant under rotation about $x_1$ axis. Straightforward computations show that the tangent line (in the first two dimensions) to $T(2B^d)$ at $(x_1,x_2)=(2-\delta,\mu\sqrt{\delta(4-\delta)})$ has the slope $\mu\frac{-2+\delta}{\sqrt{\delta(4-\delta)}}<0$ and intersects the vertical line $x_1=1$ at
\[
x_2=\mu\left(\frac{(2-\delta)(1-\delta)}{\sqrt{\delta(4-\delta)}} + \sqrt{\delta(4-\delta)}\right) \ge \frac {c \mu}{\sqrt{\delta}} \ge c(d) \beta_1^{\frac1{d-1}} \ge 1,
\]
where we choose $\beta_1$ to be sufficiently large. In other words, we established that this line is above the unit disc, which implies~\eqref{eqn-b1-ok}.

For the remaining case $\mu>1$, we have $v^{\frac1{d-1}}\ge c(d)\sqrt{\delta}$, so $\min\{\delta^{1-d/2}v,\sqrt{\delta}\} \approx c(d) \sqrt{\delta}$. Take $D$ to be the convex hull of $2B^d$ and the $(d-1)$-sphere
\[
\{(x_1,\dots,x_d):x_1=2-\delta,\,x_2^2+\dots+x_d^2=\mu^2\delta(4-\delta)\}.
\]
As in the previous case, we have $\dist(\x,\partial T(2B^d))=\delta$ and the choice $\u=(1,0,\dots,0)$ satisfies the required properties. Further, by~\eqref{mu_sel}, ${\rm Vol}_{d-1}(\{\y\in D:(\y-\x)\perp \u\})=v$, and 
\[
B^d\subset 2B^d\subset D\subset \sqrt{(2-\delta)^2+\mu^2\delta(4-\delta)}B^d\subset 3 B^d
\]
if we choose $\beta_2<{\rm Vol}_{d-1}(B^{d-1})$. Now~\eqref{compare} and~\eqref{ball} yield the required $\lambda_n(D,\x)\ge\lambda_n(2B^d,\x)\approx c(d,\sigma) \sqrt{\delta}$.
\end{proof}

{\bf Acknowledgement.} The author is grateful to Fend Dai for valuable comments.

\begin{bibsection}
\begin{biblist}

\bib{Bo-elal}{article}{
   author={Bos, L.},
   author={Della Vecchia, B.},
   author={Mastroianni, G.},
   title={On the asymptotics of Christoffel functions for centrally
   symmetric weight functions on the ball in ${\mathbf R}^d$},
   booktitle={Proceedings of the Third International Conference on
   Functional Analysis and Approximation Theory, Vol. I (Acquafredda di
   Maratea, 1996)},
   journal={Rend. Circ. Mat. Palermo (2) Suppl.},
   number={52, Vol. I},
   date={1998},
   pages={277--290},
}

\bib{Co-Da-Le}{article}{
   author={Cohen, Albert},
   author={Davenport, Mark A.},
   author={Leviatan, Dany},
   title={On the stability and accuracy of least squares approximations},
   journal={Found. Comput. Math.},
   volume={13},
   date={2013},
   number={5},
   pages={819--834},
}

\bib{Da-Xu}{book}{
	author={Dai, Feng},
	author={Xu, Yuan},
	title={Approximation theory and harmonic analysis on spheres and balls},
	series={Springer Monographs in Mathematics},
	publisher={Springer},
	place={New York},
	date={2013},
	pages={xviii+440},
}


\bib{Di-Pr}{article}{
   author={Ditzian, Z.},
   author={Prymak, A.},
   title={On Nikol'skii inequalities for domains in $\mathbb{R}^d$},
   journal={Constr. Approx.},
   volume={44},
   date={2016},
   number={1},
   pages={23--51},
}

\bib{Jo}{article}{
   author={John, Fritz},
   title={Extremum problems with inequalities as subsidiary conditions},
   conference={
      title={Studies and Essays Presented to R. Courant on his 60th
      Birthday, January 8, 1948},
   },
   book={
      publisher={Interscience Publishers, Inc., New York, N. Y.},
   },
   date={1948},
   pages={187--204},
}

\bib{Kr}{article}{
   author={Kro{\'o}, Andr{\'a}s},
   title={Christoffel functions on convex and starlike domains in $\mathbb{R}^d$},
   journal={J. Math. Anal. Appl.},
   volume={421},
   date={2015},
   number={1},
   pages={718--729},
}

\bib{Kr-Lu}{article}{
   author={Kro\'o, A.},
   author={Lubinsky, D. S.},
   title={Christoffel functions and universality in the bulk for
   multivariate orthogonal polynomials},
   journal={Canad. J. Math.},
   volume={65},
   date={2013},
   number={3},
   pages={600--620},
}

\bib{Ma-To}{article}{
	author={Mastroianni, Giuseppe},
	author={Totik, Vilmos},
	title={Weighted polynomial inequalities with doubling and $A_\infty$
		weights},
	journal={Constr. Approx.},
	volume={16},
	date={2000},
	number={1},
	pages={37--71},
}

\bib{To}{article}{
   author={Totik, Vilmos},
   title={Asymptotics for Christoffel functions for general measures on the
   real line},
   journal={J. Anal. Math.},
   volume={81},
   date={2000},
   pages={283--303},
}

\bib{Va}{article}{
	author={Varga, T.},
	title={Christoffel functions for doubling measures on quasismooth curves
		and arcs},
	journal={Acta Math. Hungar.},
	volume={141},
	date={2013},
	number={1-2},
	pages={161--184},
}

\bib{Wa}{article}{
	author={Walther, G.},
	title={On a generalization of Blaschke's rolling theorem and the
		smoothing of surfaces},
	journal={Math. Methods Appl. Sci.},
	volume={22},
	date={1999},
	number={4},
	pages={301--316},
}

\bib{Xu}{article}{
   author={Xu, Yuan},
   title={Asymptotics for orthogonal polynomials and Christoffel functions
   on a ball},
   journal={Methods Appl. Anal.},
   volume={3},
   date={1996},
   number={2},
   pages={257--272},
}

\end{biblist}
\end{bibsection}

\end{document}